  \newcommand{\numberlike}[2]{%
     \expandafter\def\csname c@#1\endcsname{%
         \expandafter\csname c@#2\endcsname}%
  }
  \def\DefaultNumberTheoremWithin{section}
  \theoremstyle{plain}
  \newtheorem{lemma}{Lemma}
     \numberwithin{lemma}{\DefaultNumberTheoremWithin}
     \numberwithin{claim}{\DefaultNumberTheoremWithin}
  \newtheorem{theorem}{Theorem}
     \numberwithin{theorem}{\DefaultNumberTheoremWithin}
  \newtheorem{corollary}{Corollary}
     \numberwithin{corollary}{\DefaultNumberTheoremWithin}
  \newtheorem{proposition}{Proposition}
     \numberwithin{proposition}{\DefaultNumberTheoremWithin}
     \numberwithin{conjecture}{\DefaultNumberTheoremWithin}
  \newtheorem*{warning*}{Warning}
  \theoremstyle{definition}
  \newtheorem*{definition}{Definition}
  \theoremstyle{definition}
     \numberwithin{question}{\DefaultNumberTheoremWithin}
  \theoremstyle{definition}
     \numberwithin{problem}{\DefaultNumberTheoremWithin}
  \theoremstyle{remark}
  \newtheorem{remark}{Remark}
     \numberwithin{remark}{\DefaultNumberTheoremWithin}
  \theoremstyle{remark}
  \newtheorem{example}{Example}
     \numberwithin{example}{\DefaultNumberTheoremWithin}
\def\bA{\mathbf A}
\def\bC{\mathbf C}
\def\bD{\mathbf D}
\def\Fun{\mathbf{Fun}}
\def\Set{\mathbf{Set}}
\def\blt{*}
\def\one{[1]}
\def\kk{{R}}
\def\ve{\varepsilon}
\def\id{\textnormal{id}}
\def\letbe{\coloneqq}
\def \la{\longrightarrow}
\def \lm{\longmapsto}
\def\for{\textnormal{for }}
\def\sgn{\textnormal{sgn}}
\def\Dsym{\Delta^{\textnormal{sym}}_a}
\def\ZZ{\mathbb Z}
\def\sym{\textnormal{sym}}
\def\rev{\textnormal{r}}
\def\trp{\textnormal{t}}
\def\pirev{q}
\def\symtowermap{\varphi}
\def\revtowermap{\psi}
\def\bd{\boldsymbol d}
\def\bs{\boldsymbol s}
\def\bt{\boldsymbol t}
\def\br{\boldsymbol r}
\def\bg{\boldsymbol\gamma}
\def\bConnection{\bg}
\def\Deg{\mathrm{D}}    
\def\sDeg{\Deg^{\mathrm{sym}}}  
\newcommand{\DpsD}[2]{\big(\Deg+\sDeg\big)_{#1} #2} 
\def\cDeg{\mathrm{cD}}     
\def\Con{\mathrm{\Gamma}}  
\def\tCon{\Con^{\trp}}   
\def\rCon{\Con^{\rev}}   
\def\rtCon{\Con^{\rev,\trp}} 
\def\ACon{\Con^A} 
\def\posCon{\Con^+} 
\def\Connection{\gamma}
\def\fd{\mathfrak{d}}
\def\fs{\mathfrak{s}}
\def\fConnection{\mathfrak{g}}
\def\ft{\mathfrak{t}}
\def\fr{\mathfrak{r}}
\newcommand{\del}{\partial}
\newcommand{\dblar}[3]{\ar[yshift=3.1pt]{#1}{#2}
	\ar[yshift=-3.1pt]{#1}[swap]{#3}} 
\newcommand{\Mod}[1]{\mathbf{Mod}(#1)}
\newcommand{\Ch}[1]{\mathbf{Ch}(#1)} 
\newcommand{\sMod}[2]{\mathbf{sMod}_\textnormal{a}^{\textnormal{#1}}(#2)}  
\newcommand{\cMod}[2]{\mathbf{cMod}^{\textnormal{#1}}(#2)}  
\newcommand{\sSet}[1]{\mathbf{sSet}_a^\textnormal{#1}} 
\newcommand{\cSet}[1]{\mathbf{cSet}^\textnormal{#1}} 
\newcommand{\SimpMoore}[1]{N_{#1}^\Delta} 
\newcommand{\CubMoore}[1]{N_{#1}^\square} 
\newcommand{\Sym}[1]{S_{[#1]}}
\newcommand{\argdel}[2]{\del^{#1}_{#2}}
\newcommand{\argdelkub}[2]{\del^{#1}_{#2}}
\newcommand{\pmap}[2]{p_{#2}^{#1}}
\newcommand{\prev}[2]{q_{#2}^{#1}}
\newcommand{\ptr}[1]{u_{#1}}
\def\pitr{u}
\newcommand{\firstappearence}[1]{{\sf #1}}
\newcommand{\hrev}[1]{h^r_{#1}}
\newcommand{\repcub}[3]{\square^{#1,#2}([1]^{#3})} 
\newcommand{\repcat}[2]{\square^{#1,#2}} 
\newcommand{\projzero}[1]{\overline{(#1)}}
\newcommand{\projone}[2]{\overline{(#1,#2)}}
\newcommand{\projtwo}[3]{\overline{(#1,#2,#3)}}
\newcommand{\projthree}[4]{\overline{(#1,#2,#3,#4)}}
\newcounter{cubicalcounter}[section] 
\newenvironment{cubical}
{%
  \refstepcounter{cubicalcounter}
  \start@align\@ne\st@rredtrue\m@ne
  \tag{$\square_{\thecubicalcounter}$}
}{%
  \endalign
}
\newcounter{simplicialcounter}[section] 
\newenvironment{simplicial}
{%
  \refstepcounter{simplicialcounter}
  \start@align\@ne\st@rredtrue\m@ne
  \tag{$\triangle_{\thesimplicialcounter}$}
}{%
  \endalign
}
\DeclareFontFamily{U}{dmjhira}{}
\DeclareFontShape{U}{dmjhira}{m}{n}{ <-> dmjhira }{}
\DeclareRobustCommand{\yo}{\textnormal{\usefont{U}{dmjhira}{m}{n}\symbol{"48}}}
\author{Curtis Greene}
\address{ Haverford College, Haverford, PA 19041, USA}
\email{cgreene@haverford.edu}
\author{Volkmar Welker}
\address{Fachbereich Mathematik und Informatik, Philipps-Universit\"at Marburg, 35032 Marburg, Germany}
\email{welker@mathematik.uni-marburg.de}
\author{Georg Wille}
\address{Fachbereich Mathematik und Informatik, Philipps-Universit\"at Marburg, 35032
Marburg, Germany}
\email{georg.wille@uni-marburg.de}
\title{On the homology of simplicial and cubical sets with symmetries}
\begin{document}

\begin{abstract}
    We study the homology of simplicial and cubical sets with symmetries. These are simplicial and cubical sets with additional maps expressing the symmetries of simplices and cubes. We consider the chain complex computing the homology groups in either case. We show for coefficients in fields of characteristic $0$ that the sub-complex generated by degeneracies (simplicial case) or connections (cubical case) together with all $x - \sgn(t)tx$ for symmetries $t$ and chains $x$ is acyclic. In particular, it follows that quotienting by this sub-complex yields a chain complex with isomorphic homology. The latter leads to structural insight and a speedup in explicit computations. 
    We also exhibit examples which show that acyclicity does not hold for general coefficient rings $R$. 
\end{abstract}

\subjclass{05E99, 55U15}

\keywords{Simplicial set, cubical set, symmetry, homology}

\maketitle

\section{Introduction} \label{sec:intro}

In this paper we consider simplicial and cubical sets with symmetries
in the sense of \cite[p.58--59]{GodementBook} and
\cite{Grandis,Grandis09}. For a commutative ring $R$ the 
simplicial and cubical $R$-modules arising as chain groups in the chain 
complexes computing the homology of the simplicial and cubical sets 
inherit the symmetries. We study when for a group of
symmetries the submodule generated by $x - \sgn(t)tx$ for 
$x$ a simplicial or cubical chain and $t$ an element of the symmetry group is an acyclic sub-complex. \ref{mainSimp} and \ref{mainCub} answer this question
positively for fields $R$ of characteristic $0$. In addition, since taking quotients by an acyclic sub-complex leaves the homology invariant, the results can be used to speed up homology 
computations. 
In \ref{sec:counter} we provide examples which show that the conditions on $R$ in
\ref{mainSimp} and \ref{mainCub} cannot be relaxed easily. 

To put this setting into the context of 
classical results, consider the following two app\-roaches to the 
cal\-cu\-lation of simplicial homology of an abstract simplicial complex $K$ with coefficients in some com\-mutative ring $R$. 

For the first, we start with the simplicial set $\mathbf{Sym}_a(K)$ whose $n$-simplices are the
$(n+1)$-tuples $(v_0,\dots,v_n)$ of not necessarily distinct vertices of $K$ for which $\{v_0,\ldots, v_n\} \in K$ with the usual face and degeneracy maps. Clearly, the $n$-simplices are invariant under the
action of $\textnormal{Sym}(\{0,\ldots,n\})$ permuting the coordinates and the action exhibits certain compatibilities with the face and degeneracy maps. Therefore, it 
constitutes an example of a simplicial set with symmetries in the sense of this paper. The corresponding chain complex has
as the chain group in degree $n$ the free $R$-module with the $n$-simplices as its basis and the simplicial differential 
as maps. With the induced action of $\textnormal{Sym}(\{0,\ldots,n\})$ the chain complex becomes a simplicial
$R$-module with symmetries.  

The second approach to the homology of $K$ is the one taken
by most textbooks (see e.g. \cite{hatcher}). 
Choose a total order $\preceq$ on the set of vertices of the simplicial complex. 
Then consider the sub-complex of the chain complex 
associated to $\mathbf{Sym}_a(K)$ with basis the $(n+1)$-tuples $(v_0,\ldots, v_n)$ for strictly increasing $v_0 \prec \cdots \prec v_n$ and 
$\{v_0,\ldots, v_n \} \in K$. 
The fact that the chain complex and its sub-complex 
lead to isomorphic
homology groups is classical (see \cite{EilenbergSteenrodBook}). 

It is usually proved by showing that the inclusion of the sub-complex is a quasi-isomorphism of chain complexes. The cokernel of this inclusion is generated by the cosets of all $(v_0,\ldots, v_n)-\sgn(t)\,(v_{t(0)},\ldots, v_{t(n)})$ for permutations $t \in \textnormal{Sym}(\{0,\dots, n\})$ and of all those $(v_0,\ldots v_n)$ for
which there are $0 \leq i < j \leq n$ with $v_i=v_j$.
Since the inclusion is a quasi-isomorphism the cokernel is acyclic. For fields $R$ of characteristic $0$ this is a  special case
of \ref{mainSimp}. 
 A more detailed account of this case, including the simplicial set of weakly increasing chains, is given in \ref{SymOr} and in the discussion after \ref{mainSimp}. 
An analogous result for general 
 simplicial $R$-modules with symmetries has been termed a ''possible conjecture'' in the first paragraph of \cite[p. 59]{GodementBook}. The results of \ref{sec:counter} show that this conjecture fails in general.

Our initial motivation for this work comes from a concrete example of a cubical set with symmetries: Discrete homotopy theory of graphs $G$ (see \cite{HTgraphs06,ComparisonWithPath,carranza2022cubicalsettingdiscretehomotopy}). The corresponding homology theory $H^{\textnormal{disc}}_\blt(G)$ was first studied in \cite{DiscreteHomology}. Discrete homotopy theory was
originally devised as a tool to analyze social networks represented as graphs
in \cite{Atkin1,Atkin2}. For its applications it is crucial that the 
algorithmic calculation of the homotopy and homology of 
a given graph can be done as efficiently as possible. For homology, reducing the size of the chain groups is
the most effective way to achieve algorithmic speed ups.
The consequences of \ref{mainCub} have already been shown to 
lead to substantial speed ups in 
homology computations (see \cite{NathanAndChris}). 
\ref{ex:cube} provides a detailed description of this case.

The paper is organized as follows. In 
\ref{sec:basic} the simplicial 
(in \ref{subsec:simpcat}) and cubical
(in \ref{subsec:cubcat}) sets and modules and
their respective homology theories are introduced. The main
results \ref{mainSimp} and \ref{mainCub} are stated
together with a detailed description of the examples outlined in \ref{sec:intro}.
In \ref{sec:simcub} a functor from cubical to simplicial sets (both with symmetries) is constructed. Beyond its use in \ref{sec:homology} it is shown to provide
means for connecting the cubical and simplicial Moore complex and for proving 
acyclicity of the 
positive connection sub-complex for cubical sets.
In \ref{sec:homology} we then provide the proofs of \ref{mainSimp} and \ref{mainCub}.
Finally in \ref{sec:counter} we 
construct examples
which show that the assumptions on $R$ in the two main results cannot easily be relaxed. Thereby we 
also provide a counterexample to the conjecture suggested
\cite[p. 59]{GodementBook}. In \hyperref[appendix]{Appendix~A}
we collect identities for the 
maps in simplicial and cubical sets with symmetries. The identities 
involving face and degeneracy maps are standard and can be found
in many places in the literature (see e.g. \cite[Chapter 8.1]{WeibelBook} or \cite{Kan}).
In the simplicial case the identities involving symmetries can be found in \cite{Grandis} (in different notation). In the cubical case most of the identities can be found
in \cite{GrandisMauri}. They are all simple consequences of the
definitions of the simplex and cube categories.

\section{Basic concepts and statement of main results} \label{sec:basic} 

For an integer $n \geq -1$ we write $[n]$ for the set $\{0,1,\dots n\}$ with the convention $[-1]=\emptyset$.
We denote by $\Sym{n}$ the group of permutations of $[n]$, which 
is up to shifting the elements by $1$ the usual symmetric group $S_{n+1}$ on $n+1$ letters.
For two categories $\bC$ and $\bD$ we write 
$\Fun(\bC,\bD)$ for the functor category with objects the 
covariant functors from
$\bC$ to $\bD$ and morphisms the natural transformations. We denote by 
$\bC^{ op}$ the opposite category of $\bC$ and by $\Set$ the category of sets. All rings will be commutative and unital. We will denote by ${\bf Mod}(R)$ the category of modules over a ring $R$. For an Abelian category $\bA$ we denote by $\Ch{\bA}$ the category of chain complexes in $\bA$.

\subsection{Simplex Categories}\label{subsec:simpcat}

The \firstappearence{augmented simplex category} $\Delta_a$ is the category with objects $[n]$ for integers $n \geq -1$ and
morphisms generated by

\medskip

\begin{enumerate}
    \item[(fac)] \firstappearence{face maps} $\bd_i$ for $0\leq i \leq n $
    
    \medskip
    
    \begin{align*}
        \bd_i\colon \left\{ \begin{array}{ccc} [n-1]&\la & [n]    \\
        j& \lm & \begin{cases}
            j & \for j <i \\
            j+1& \textnormal{otherwise}
              \end{cases} \end{array} \right.
    \end{align*}

    \medskip
    
    \item[(deg)] and \firstappearence{degeneracy maps} $\bs_i$ for $0\leq i \leq n$

    \medskip
    
        \begin{align*}
        \bs_i\colon\left\{ \begin{array}{ccc} [n+1]&\la& [n]    \\
        j&\lm &\begin{cases}
            j & j \leq i\\
            j-1& \textnormal{otherwise}.
               \end{cases}\end{array} \right.
    \end{align*}
\end{enumerate}

Furthermore, the \firstappearence{augmented simplex category with symmetries}  $\Delta_a^ {sym}$ is the
category whose objects are the
objects of the augmented simplex category $\Delta_a$ and whose maps are generated by face and degeneracy maps as well as

\begin{enumerate}
    \item[(trp)] simple transpositions $t_i$ for $0\leq i\leq n-1$

        \begin{align*}
        \bt_i\colon \left\{ \begin{array}{ccc} [n]&\la &[n]    \\
        j&\lm &\begin{cases}
            i+1& \for j=i \\
            i& \for j=i+1 \\
            k& \textnormal{otherwise}.
        \end{cases} \end{array} \right.
    \end{align*}
\end{enumerate}

\bigskip

We call a map in $\Dsym$ which is a composition of simple transpositions a \firstappearence{symmetry}. Clearly, for a fixed $n$ the set of
symmetries can be identified with the symmetric group 
$\Sym{n+1} \cong \Sym{[n]}$. As usual, we define the sign \firstappearence{sign} $\sgn(\bt)$ of a symmetry $\bt=\bt_{i_1}\cdots \bt_{i_k}$ to be $1$ if $k$ is even and to $-1$ if $k$ is odd.
It is a simple exercise (see e.g., \cite{Grandis}) 
 that every map between the finite sets $[m]$ and $[n]$ can be written as a composition of these $\bd_i,\bs_i$ and $\bt_i$, i.e. $\Dsym$ is a skeleton of the category of finite sets.

\begin{definition}~
\begin{enumerate}
    \item The \firstappearence{category of symmetric simplicial sets with symmetries} $\sSet{sym}$ is the functor category $$\mathbf{Fun}\big(\,(\Dsym)^{op},\mathbf{Set}\,\big).$$ 
    We also refer to symmetric simplicial sets as \firstappearence{simplicial sets with symmetries}.
    \item The \firstappearence{category of symmetric simplicial $R$-mod\-ules} $\sMod{sym}{R}$ for a commutative ring $R$ is the functor category $$\mathbf{Fun}((\Dsym)^{op},\mathbf{Mod}(R)).$$ We also refer to symmetric simplicial $R$-mod\-ules as \firstappearence{simplicial $R$-modules with sym\-me\-tries}.
\end{enumerate}

Analogously, the categories $\sSet{}$ of simplicial sets and 
$\sMod{}{R}$ of simplicial $R$-modules are the categories of functors
from $\Delta_a$ to $\mathbf{Set}$ or $\mathbf{Mod}(R)$ respectively.

A (symmetric) simplicial $\mathbb{Z}$-module will be called a \firstappearence{(symmetric) simplical abelian group}.
\end{definition}

\medskip
 The \firstappearence{unaugumented (symmetric) simplex category} is the full subcategory of the augumented (symmetric) simplex category with all objects except $[-1]$. An unagumented (symmetric) simplicial $R$-module is a $\Mod{R}$ valued presheaf over the unaugumented (symmetric) simplex category. 
 From an unaugumented simplicial $R$-module $X$ one can obtain an au\-gu\-men\-ted (symmetric) simplical $R$-Module by defining $X[-1]=0$. This way the category of unaugumented simplicial $R$-modules can be embedded into the category of augumented simplical $R$-modules.

For us, the categories $\sSet{sym}$, $\sMod{sym}{R}$, 
$\sSet{}$ and
$\sMod{}{R}$
are all augmented, but for the sake of a more succinct notation we 
have dropped "augmented" from their names. The subscript $a$ still indicates
that the categories are augmented. We write $\sSet{(sym)}$ and $\sMod{(sym)}{R}$ when we wish to leave open whether the simplicial sets or simplicial \(R\)-modules have symmetries or not.

Postcomposing with the free functor $R(\cdot)\colon\mathbf{Set}\to \mathbf{Mod}(R)$ gives rise to a functor from the category of (symmetric) simplicial sets to the category of (symmetric) simplicial $R$-modules. Note that not every (symmetric) simplicial $R$-module arises this way.

\medskip
Throughout this paper we will denote
the maps in $\Dsym$ by bold faced letters and their
images in a functor category 
$\mathbf{Fun}\big(\,(\Dsym)^{op},\blt\,\big)$
by the respective non-bold faced letter (e.g., 
the image of $\bd_i$ denoted as $d_i$).

\begin{example} \label{SymOr}
    Given a finite simplicial complex $K$ consider the following two ways to construct a simplicial set from $K$.
    \begin{enumerate}
        
        \item The first simplicial set has $n$-simplices defined by
        \begin{align*}
            \mathbf{Sym}_a(K)[n] \letbe \big\{(v_{i_0}, v_{i_1}, \dots , v_{i_n})~\big\vert~\{v_{i_0},v_{i_1},\dots ,v_{i_n}\}\in K\big\}.
        \end{align*}
        Now equipped with the obvious face and degeneracy maps as well as symmetries acting by the permutations of the tuple entries $\mathbf{Sym}_a(K)$ is a symmetric simplicial set. 
        \item For the second example choose a total order $\{\,v_0 \prec v_1 \prec \cdots  \prec v_m\, \}$ on the vertex set of $K$. Then for an 
        integer $n \geq -1$
        set
    \begin{align*}
    \mathbf{Or}_a(K)[n]\letbe \big\{(v_{i_0}\preceq v_{i_1}\preceq \cdots \preceq v_{i_n})~\big\vert~\{v_{i_0},v_{i_1},\dots ,v_{i_n}\}\in K\big\}
    \end{align*}
        equipped with the obvious face and degeneracy maps is a simplicial set. In general, this simplicial set cannot be endowed with symmetries.
    \end{enumerate}
    Notice there is a canonical inclusion $j\colon\mathbf{Or}_a(K)\hookrightarrow \mathbf{Sym}_a(K)$ of simplicial sets.
\end{example}

\begin{definition}[alternating face map complex]\label{def:alternatingfacemapcomplex}
    The alternating face map complex $$C_\blt X \colon
    \cdots \rightarrow C_{n}X \xrightarrow{\partial_n} C_{n-1} X \xrightarrow{\partial_{n-1}} \cdots \xrightarrow{\partial_0} C_0 X
    \xrightarrow{\partial_{-1}} C_{-1} X \rightarrow 0$$ of a (symmetric) simplicial $R$-module $X\in \sMod{(sym)}{R}$ is given by $C_nX=X([n])$ and with differential
    \[
    \partial_n =\sum_{i=0}^n (-1)^i d_i.
    \]
\end{definition}

The usual calculation shows that $\partial_{n-1} \circ \partial_n = 0$, the 
construction gives rise to a functor 
$C_\blt\colon\sMod{(sym)}{R} \to \mathbf{Ch}(\mathbf{Mod}(R))$
into the category $\mathbf{Ch}\big(\,\mathbf{Mod}(R)\,\big)$ of chain complexes of $R$-modules. 
For a (symmetric)
simplicial set $X$ we write $C_\blt(X;R)$
for the chain complex which is the image of $X$ under 
$$\sSet{(sym)} \overset{R(\cdot)}{\la} \sMod{(sym)}{R} \overset{C_\blt}{\la} \mathbf{Ch}(\mathbf{Mod}(R)).$$
We can then postcompose with the homology functor 
$H_\blt$ 
to obtain homology $R$-modules. If $X$ is a (symmetric) simplicial set we
write $H_\blt\big(\,X;R\,\big)$ for $H_\blt \big(\,C_\blt (X;R)\,\big)$ and if $X$ is a (symmetric) simplicial module we write $H_\blt\big(\,X\big)$ 
for $H_\blt \big(\,C_\blt X\,\big)$.

If $X$ is a symmetric simplicial set, the $R$-module $C_n(X;R)$ 
inherits the symmetries and hence the action of the symmetric group,

but a priori it is not clear how and if the symmetries still
act on $H_\blt\big(\, X;R\,\big)$.
For example, let $K$ be the abstract simplicial complex
$2^{ \{1,2\}}$. Cal\-cu\-lat\-ing the differential of $\sigma \letbe (1,1,2)-(1,1,1)
\in C_2(\,\mathbf{Sym}_a(K);R\,)$ yields $0$ but
\[\partial_1 (t_1\,\sigma) = (2,1)+(1,2)-2 (1,1).\] In particular, the 
kernel of $\partial_n$ is not necessarily invariant under
symmetries. 

\begin{definition}~
    \begin{itemize}
        \item[($\Deg$)]Let $X$ be a (symmetric) simplicial $R$-module. The \firstappearence{degeneracy sub-complex} $\Deg_\blt X$ is the sub-complex of $C_\blt X$ generated by the images of the degeneracy maps.
        \item[($\sDeg$)] Let $X$ be a symmetric simplicial $R$-module. The \firstappearence{symmetry sub-complex} $\sDeg_\blt X$ is the sub-complex of $C_\blt X$ generated by chains of the form $x-\sgn(t)tx$ for a symmetry $t$ and a chain $x\in C_n X$
    \end{itemize}
\end{definition}
The fact that $\Deg_\blt X$ is indeed a sub-complex, i.e. $\del_n\big(\, \Deg_n X \,\big)\subseteq \Deg_{n-1} X$ follows immediately from \eqref{eq:III}, for $\sDeg_\blt X$ this is will be shown in \ref{SymSplit}. While simple examples show that for a symmetric simplicial module $\Deg_\blt X$ is not necessarily invariant 
under symmetries, this clearly holds for $\sDeg_\blt X$. For a field $R$ of characteristic different from two or more generally a ring $R$ in which $2$ is a unit, it follows from \eqref{eq:V} that a degeneracy $s_ix$ in $\sDeg_\blt X$ can be written as
\[
s_ix=\frac{1}{2}\big(\,s_ix + t_is_ix\,\big),
\]
and therefore $\Deg_\blt X$ is contained in $\sDeg_\blt X$ in this case. For arbitrary coefficients however, this is no longer true; in particular, the chain complex $\DpsD{\blt}{X}\letbe \Deg_\blt X + \sDeg_\blt X$ can be larger than $\sDeg_\blt X$.

It is well known that the degeneracy sub-complex is acyclic for any simplicial set (see \cite[Proof of Theorem 8.3.8]{WeibelBook}). The sub-complex $\DpsD{\blt}{X}$ is known to be acyclic for $X= R(\mathbf{Sym}_a(K))$, where $K$ is a simplicial complex (see \cite[p. 173-176]{EilenbergSteenrodBook}). It has been suggested as a possible conjecture that the sub-complex $\Deg_\blt X + \sDeg_\blt X$ is acyclic for general simplicial $R$-modules \cite[first paragraph, p.59]{GodementBook}. 
Our first main result, proved in \ref{sec:homology}, states that this conjecture has a positive answer if $R$ is a field of characteristic $0$.

\begin{theorem}\label{mainSimp}
    Let $R$ be a field of characteristic $0$ and $X\in\sMod{\sym}{R}$ a symmetric simplicial $R$-module, 
    then $\sDeg_\blt X$ is acyclic.

    In particular, $$H_*(X) \cong H_*\big(\,C_*(X)/\sDeg_* (X)\,\big)$$ and
    for a homology class $[\alpha] \in H_n\big(\,C_*(X)/\sDeg_* (X)\,\big)$
    and a symmetry $t$ we have $$[t \alpha] = \sgn(t) [\alpha].$$ 
\end{theorem}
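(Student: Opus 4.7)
The natural approach in characteristic $0$ is to use the antisymmetrizer
\[
a_n \letbe \frac{1}{(n+1)!}\sum_{\tau\in\Sym{n}}\sgn(\tau)\,\tau \in R[\Sym{n}],
\]
viewed as an endomorphism of $C_n X$ via the symmetry action. A direct calculation in $R[\Sym{n}]$ shows $a_n$ is idempotent and $a_n\tau = \tau a_n = \sgn(\tau) a_n$ for every symmetry $\tau$. From $a_n\bigl(x - \sgn(\tau)\tau x\bigr) = 0$ one obtains $\sDeg_n X \subseteq \ker(a_n)$; conversely, if $a_n y = 0$ then $y = \frac{1}{(n+1)!}\sum_\tau \bigl(y - \sgn(\tau)\tau y\bigr)$ realises $y$ as an $R$-linear combination of generators of $\sDeg_n X$. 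Setting $A_n X \letbe a_n(C_n X)$, we get a module-level splitting $C_n X = A_n X \oplus \sDeg_n X$.

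I would then upgrade this to a splitting of chain complexes. That $\sDeg_\blt X$ is a sub-complex is \ref{SymSplit}. For $A_\blt X$, I use the identity $\bd_i \circ \bt = \bt' \circ \bd_i$ from \hyperref[appendix]{Appendix~A}, where $\bt \in \Sym{n-1}$ and $\bt' \in \Sym{n}$ is its extension fixing $i$, so that $\sgn(\bt') = \sgn(\bt)$. Applying $X$ gives $t\, d_i = d_i\, t'$, and for antisymmetric $y \in A_n X$,
\[
t\,\partial_n y \;=\; \sum_{i} (-1)^{i}\, d_i\, t'\, y \;=\; \sgn(t)\,\partial_n y,
\]
so $\partial_n y \in A_{n-1} X$. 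Consequently $C_\blt X = A_\blt X \oplus \sDeg_\blt X$ as chain complexes and $H_*(X) = H_*(A_\blt X) \oplus H_*(\sDeg_\blt X)$.

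The main obstacle lies in the remaining step: proving $H_*(\sDeg_\blt X) = 0$, equivalently that the inclusion $A_\blt X \hookrightarrow C_\blt X$ is a quasi-isomorphism. Unlike $\Deg_\blt X$, whose acyclicity admits a classical contracting homotopy built from degeneracies alone, $\sDeg_\blt X$ intertwines degeneracies with sign-twisted symmetries, and any contracting homotopy must combine both; the characteristic-$0$ hypothesis should enter here in an essential way (for instance, through the averaging implicit in $a_n$). One promising route exploits $\Deg_\blt X \subseteq \sDeg_\blt X$, which holds since $2$ is invertible: the short exact sequence
\[
0 \to \Deg_\blt X \to \sDeg_\blt X \to \sDeg_\blt X / \Deg_\blt X \to 0
\]
combined with the classical acyclicity of $\Deg_\blt X$ reduces the problem to showing the quotient is acyclic, a computation that takes place inside the more tractable Moore complex $C_\blt X / \Deg_\blt X$. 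Alternatively, as the organisation of the paper suggests, one can transfer an acyclicity statement from the cubical side via the functor constructed in \ref{sec:simcub}, where an explicit contracting homotopy built from connections is more accessible.

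Once acyclicity is established, the ``in particular'' statements are immediate: the short exact sequence $0 \to \sDeg_\blt X \to C_\blt X \to C_\blt X / \sDeg_\blt X \to 0$ together with $H_*(\sDeg_\blt X)=0$ gives $H_*(X) \cong H_*\bigl(C_*(X)/\sDeg_*(X)\bigr)$, while the relation $[t\alpha] = \sgn(t)[\alpha]$ holds already at the chain level in $C_\blt X / \sDeg_\blt X$, since $x - \sgn(t)\, t x \in \sDeg_n X$ for every chain $x$ and symmetry $t$ by the defining generators of $\sDeg_\blt X$.
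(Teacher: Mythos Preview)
Your splitting argument is essentially the same as the paper's \ref{SymSplit}: the antisymmetriser $a_n$ is exactly the paper's $p^n_n$, and your identification of kernel and image agrees. Your direct verification that $A_\blt X$ is a sub-complex via the extension $t \mapsto t'_i$ is correct (note $t'$ should be written $t'_i$ since it depends on $i$), and is in fact a bit cleaner than the paper's route, which instead introduces \emph{partial} antisymmetrisers $p^k_n = \frac{1}{(k+1)!}\sum_{t\in\Sym{k}}\sgn(t)\,t$ and proves the refined identity $\partial_n p^k_n = p^{k-1}_{n-1}\partial^{\leq k}_n + p^k_{n-1}\partial^{>k}_n$ to deduce that $p_X = \{p^n_n\}$ is a chain map.

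However, there is a genuine gap: you do not prove acyclicity of $\sDeg_\blt X$, and both of your suggested routes are problematic. The Moore-complex reduction is left as a sketch with no indication of how to contract $\sDeg_\blt X/\Deg_\blt X$. More seriously, the idea of transferring acyclicity from the cubical side via the functor of \ref{sec:simcub} is backwards: that functor $S$ goes from cubical modules to simplicial modules, and the paper uses it to \emph{deduce} the cubical case from the simplicial one, not the reverse. There is no functor in the other direction available here.

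The paper's missing ingredient is precisely an explicit chain homotopy, and this is where the partial antisymmetrisers become essential rather than merely convenient. The paper sets
\[
h_n(x) = \sum_{k=0}^n (-1)^k\, p^k_{n+1}\, s_k\, x
\]
and verifies $\partial_{n+1} h_n + h_{n-1}\partial_n = \id - p^n_n$ by a telescoping computation that uses the refined commutation formula above together with the simplicial identities for $d_i s_j$. This exhibits $p_X$ as chain-homotopic to the identity, so the inclusion $P_\blt X \hookrightarrow C_\blt X$ is a deformation retract and $\sDeg_\blt X$ is acyclic. The interpolation over $k$ is the crucial idea you are missing: the full antisymmetriser alone does not interact well enough with individual degeneracies $s_k$ to produce a homotopy, but the filtration by $p^k_n$ does.
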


 For arbitrary coefficients however, neither the symmetry sub-complex $\sDeg_\blt X$ nor the sum of sub-chain complexes $\Deg_\blt X + \sDeg_\blt X$ are necessarily acyclic, even for the symmetric simplicial $R$-modules arising from simplicial sets. In \ref{sec:counter} we construct a symmetric simplicial set, for which the symmetry sub-complex and $\Deg_\blt X + \sDeg_\blt X$ 
 both are not acyclic when $R = \ZZ$.

Note that for an arbitrary ring $R$ and a simplicial complex $K$, the simplicial sets $\mathbf{Or}_a(K)$ and $\mathbf{Sym}_a(K)$ satisfy
$$
\faktor{C_\blt(\mathbf{Sym}_a(K);R)}{\DpsD{\blt}{(\mathbf{Sym}_a(K);R)}} \simeq \faktor{C_\blt(\mathbf{Or}_a(K);R)}{\Deg_\blt(\mathbf{Or}_a(K);R)} 
$$
 which is a chain complex with one generator in degree $n$ for every (non-degenerate) $n$-simplex of $K$. The isomorphism is given by the embedding of $\mathbf{Or}_a(K)$ into $\mathbf{Sym}_aK$. Of course, this is the chain complex usually used to define reduced simplicial homology $\widetilde{H}^{\Delta}$ with coefficients in $R$ of the simplicial complex $K$ (see e.g., \cite{hatcher}). It is also the second complex calculating simplicial homology discussed in the introduction. From the classical results in  \cite{EilenbergSteenrodBook} showing acyclicity of $\sDeg_\blt\big(\,\mathbf{Sym}_a(K);R\,\big)$ and $\Deg_\blt\big(\,\mathbf{Or}_a(K);R\,\big)$ it then follows that
\[
\widetilde{H}^{\Delta}_\blt\big(\,K;R\,\big)\simeq H_\blt\big(\,\mathbf{Or}_a(K);R\,\big)\simeq H_\blt\big(\,\mathbf{Sym}_a(K);R\,\big).
\]

\subsection{Cube Categories} \label{subsec:cubcat}
The cube category $\square$ (with connections) is the category with objects $[1]^n$, $n \geq 0$, and morphisms generated by:
\begin{enumerate}
    \item[(fac)] face maps $\bd_i^\ve$ for $1\leq i \leq n$
    \begin{align*}
        \bd_i^\ve\colon \left\{ \begin{array}{ccc} \one^{n-1}&\la &\one^n  \\
        (v_1,\ldots, v_{n-1})&\lm &(v_1,\ldots,v_{i-1},\varepsilon,\dots v_{n-1})
        \end{array} \right.
    \end{align*}
    \item[(deg)] degeneracies $\bs_i$ for $1\leq i \leq n+1$
    \begin{align*}
        \bs_i\colon \left\{ \begin{array}{ccc} \one^{n+1}&\la &\one^{n}  \\
        (v_1,\ldots,v_{n+1})&\lm &(v_1,\ldots ,v_{i-1},v_{i+1},\ldots,v_{n+1}) \end{array} \right.
    \end{align*}
    \item[(con)] connections $\bg_i^\ve$ for $1\leq i \leq n$
    \begin{align*}
        \bg_i^\ve\colon \left\{ \begin{array}{ccc} \one^{n+1}&\la&\one^{n}  \\
        (v_1,\ldots, v_{n+1})&\lm & (v_1,\ldots,v_{i-1},m^\ve(v_i,v_{i+1}),\ldots v_{n+1})\end{array} \right.
    \end{align*}
    where $m^\ve$ is the maximum (minimum) for $\ve=0$ ($\ve=1$).
   \end{enumerate}

Consider the following sets of maps between the objects of the cube category.

 \begin{enumerate}
    \item[(trp)] Simple transpositions $\bt_i$ for $1\leq i \leq n-1$ 
    \begin{align*}
        \bt_i\colon \left\{ \begin{array}{ccc} \one^n&\la& \one^n  \\
        (v_1,\ldots, v_{n})&\lm &(v_1,\ldots, v_{i-1},v_{i+1},v_{i},\ldots ,v_{n})\end{array} \right.
    \end{align*}
    \item[(rev)] Simple reversals $\br_i$ for $1\leq i \leq n$
    \begin{align*}
        \br_i\colon \left\{ \begin{array}{ccc} \one^n & \la & \one^n\\
        (v_1,\ldots, v_{n})&\lm & (v_1,\ldots ,v_{i-1},1-v_{i},v_{i+1},\ldots, v_{n}) \end{array} \right.
    \end{align*}
\end{enumerate}

 A composition of simple transpositions is called an order preserving symmetry, since these are precisely those symmetries, which preserve the component-wise order on $[1]^n=\{0<1\}^n$. A composition of simple reversals is called a reversal. For a subset $\emptyset \neq A\subset \{\trp,\rev\}$ the cube category with $A$-symmetries $\square^A$ is the category whose objects are the
objects of the cube category and whose maps are generated by face maps, degeneracies, connections and the maps from the sets of maps specified in
$A$. When we wish to remain agnostic with respect to the choice of $A$, we use the term $A$\firstappearence{-symmetry} to refer to an order-preserving symmetry if $A=\{\trp\}$, a reversal if $A=\{\rev\}$, or a composition of reversals and order-preserving symmetries if $A=\{\rev,\trp\}$. As usual the \firstappearence{sign} $\sgn(t)$ of an $A$-symmetry $t$ is $1$ if it can be written as a composition of an even number of simple transpositions and simple reversals, and $-1$ otherwise.
 In the Appendix one finds a list of identities \ref{Ceq:I}--\ref{Ceq:XVII} which hold in cube categories.

\begin{definition}
Let $A\subset \{\trp,\rev\}$.
\begin{enumerate}
  \item The \firstappearence{category of $A$-symmetric cubical sets} $\cSet{A}$ is the functor category $$\mathbf{Fun}\big(\,(\square^A)^{op},\mathbf{Set}\,\big).$$
  We also refer to $A$-symmetric cubical sets as \firstappearence{cubical sets with $A$-symmetries}.
    \item For a commutative ring $R$, the \firstappearence{category of $A$-symmetric cubical
    $R$-modules} $\mathbf{cMod}^A(R)$ is the functor category 
     $$\mathbf{Fun}\big(\,(\square^A)^{op},\Mod{R}\,\big)$$
     We also refer to $A$-symmetric $R$-modules as \firstappearence{cubical $R$-modules with $A$-symmetries}.
  
    \end{enumerate}
    If $A=\emptyset$ we omit the superscript.
    An ($A$-symmetric) cubical $\mathbb{Z}$-module will be called an \firstappearence{($A$-symmetric) cubical abelian group}.
\end{definition}

As in the simplicial case, postcomposing with the free functor $R(\cdot)\colon\Set\to \Mod{R}$ induces a functor $R(\cdot)\colon\cSet{A}\to \cMod{A}{R}$.

For a cubical $R$-module $X$ and $n \geq 1$
we write $\cDeg_n(X)$ for the submodule of $X[1]^n$ generated by the images of the degeneracy maps $s_i$.

\begin{definition}[cubical alternating face map complex]
 The alternating face map complex 
 $$C_\blt X \colon
    \cdots \rightarrow C_{n} X \xrightarrow{\partial_n} C_{n-1} X \xrightarrow{\partial_{n-1}} \cdots \xrightarrow{\partial_0} C_0 X
    \xrightarrow{\partial_{-1}} C_{-1} X \rightarrow 0$$ of an $A$-symmetric cubical $R$-module $X\in \mathbf{cMod}(R)^{A}$ is given by $$C_0 X = X([1]) \text{ and }C_nX = \faktor{X([1]^n)}{\cDeg_n(X)},$$
    where 
    for $n \geq 1$ 
    \[
    \partial_n = \sum_{i=0}^n(-1)^i (d_i^0-d_i^1).
    \]
\end{definition}

It follows from the cubical identity \ref{Ceq:II} that $\del_n\big(\,\cDeg_n(X)\,\big)\subseteq \cDeg_{n-1}(X)$, thus the maps $\del_n\colon C_n(X)\to C_{n-1}(X)$ are well defined. The usual calculation shows that $\partial_{n-1} \circ \partial_n = 0$ and 
hence also in the cubical case this
construction gives rise to a functor 
$$C_\blt\colon\cMod{A}{R} \to \Ch{\Mod{R}}$$
into the category $\Ch{\Mod{R}}$ of chain complexes of
$R$-modules. We use $C_\blt$ for both the simplicial alternating face map complex (see \ref{def:alternatingfacemapcomplex}) and the cubical alternating face map complex. The nature of the input will determine which version is meant. The next lemma shows that symmetries still act on $C_\blt X$.

\begin{lemma}
   Let $X$ be an $A$-symmetric cubical $R$-module. Then for all $n$ the sub-$R$-module 
   $\cDeg_n(X)$ of $X[1]^n$ is invariant under $A$-symmetries. In particular, there is an action of the 
   $A$-symmetries on $C_nX$.
\end{lemma}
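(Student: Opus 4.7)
The plan is to reduce the claim to checking invariance on generators of both $\cDeg_n(X)$ and of the group of $A$-symmetries, and then to dispatch each case using the cubical identities from the appendix.

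First, I would note that as an $R$-submodule, $\cDeg_n(X)$ is generated by elements of the form $s_j(y)$ with $y\in X([1]^{n-1})$ and $1\le j\le n$, while the group of $A$-symmetries on $[1]^n$ is generated by the simple transpositions $t_i$ (if $\trp\in A$) and the simple reversals $r_i$ (if $\rev\in A$), each acting on $C_nX$ as an $R$-module homomorphism. Since a composition of homomorphisms that preserve $\cDeg_n(X)$ again preserves $\cDeg_n(X)$, it suffices to show that $t_i\bigl(s_j(y)\bigr)$ and $r_i\bigl(s_j(y)\bigr)$ lie in $\cDeg_n(X)$ for every admissible pair $(i,j)$ and every $y\in X([1]^{n-1})$.

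Next, I would invoke the cubical identities collected in \hyperref[appendix]{Appendix~A} (specifically those of \ref{Ceq:I}--\ref{Ceq:XVII} relating $\bs_j$ to $\bt_i$ and to $\br_i$). Contravariantly these say, in each case, that the composite $\bs_j\circ\bt_i$ (respectively $\bs_j\circ\br_i$) in $\square^A$ can be rewritten as $\bsigma\circ\bs_k$ for some simple symmetry or identity $\bsigma$ and some degeneracy $\bs_k$. Explicitly, a direct inspection of the three subcases $j<i$, $j\in\{i,i+1\}$, $j>i+1$ for transpositions shows that $t_i\circ s_j$ is either $s_j\circ t_{i\pm 1}$, or simply the single degeneracy $s_{i+1}$ or $s_i$; for reversals, the subcases $j\ne i$ and $j=i$ give $r_i\circ s_j=s_j\circ r_{i\pm 1}$ and $r_i\circ s_i=s_i$ respectively. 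In every subcase the right-hand side has the form ``symmetry $\circ$ degeneracy'', whose image on $y$ manifestly lies in $\cDeg_n(X)$.

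With the generator-level invariance in hand, the ``In particular'' clause is automatic: the $A$-symmetry action on $X([1]^n)$ descends to a well-defined $A$-symmetry action on the quotient $C_nX=X([1]^n)/\cDeg_n(X)$. The only genuine work is the bookkeeping in Step 2; the main potential obstacle is making sure the appendix identities are stated for exactly the cases needed (namely the compositions $\bs_j\circ\bt_i$ and $\bs_j\circ\br_i$ rather than only $\bt_i\circ\bs_j$ and $\br_i\circ\bs_j$), but since the cube category is self-dual enough that these appear as the contravariant reformulations of the standard simplicial-style identities, I expect the cases to match up cleanly by direct computation on coordinate tuples.
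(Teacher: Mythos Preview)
Your approach is correct and is exactly the paper's: the one-line proof there simply cites identities \ref{Ceq:IX} and \ref{Ceq:XIV}, which are precisely the relations $\bs_j\bt_i$ and $\bs_j\br_i$ that you unpack case by case. One minor bookkeeping slip: in the outer cases $j>i+1$ (transpositions) and $j>i$ (reversals) the symmetry appearing on the right is $t_i$ resp.\ $r_i$, not $t_{i+1}$ resp.\ $r_{i+1}$, but this does not affect the argument since the result is still of the form $s_k(\text{something})\in\cDeg_n(X)$.
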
 
\begin{proof}
    This follows immediately from the identities \ref{Ceq:IX} and \ref{Ceq:XIV}.
\end{proof}

As in the simplicial case we can also postcompose with the homology functor 
$H_\blt$ 
and obtain homology $R$-modules $H_\blt \big(\,C_\blt X\,\big)$. If $X$ is a cubical set we
write $H_\blt\big(\,X;R\,\big)$ 
for $H_\blt \big(\,C_\blt R(X) \big)$
and if $X$ is a cubical module we write $H_\blt\big(\,X\big)$ 
for $H_\blt \big(\,C_\blt X\,\big)$.

\begin{definition}~
\begin{itemize}
        \item[($\Con$)]  Let $X$ be a cubical $R$-module. The \firstappearence{connection sub-complex} $\Con_\blt X$ is the sub-complex of $C_\blt X$ generated by the images of the connection maps.
    \item[($\ACon$)] Let $X$ be a cubical $R$-module with $A$-symmetries with $\emptyset \neq A \subseteq\{\rev,\trp\}$. The \firstappearence{$A$-symmetry sub-complex} $\ACon_\blt X$ is the sub-complex of $C_\blt X$ generated by
         chains of the form $x-\sgn(t)tx$ for an $A$-symmetry $t$ and a chain $x\in C_n X$
        The $A$-symmetry sub-complex will be denoted by $\rCon_\blt X$ for $A=\{\rev\}$, by $\tCon_\blt X$ for $A=\{\trp\}$ and by $\rtCon_\blt X$ for $A=\{ \rev,\trp \}$. 
        \end{itemize}
\end{definition}
For a cubical set $Y$, The connection sub-complex of $R(Y)$ has been shown to be acyclic in \cite{HomologyConnections} (see also \cite{CKT},\cite{DohertyConnections}).
Similarly to the simplicial setting, our second main result concerns the $A$-symmetry sub-complex.  

\begin{theorem} \label{mainCub}
    Let $R$ be a field of characteristic $0$ and $X\in\cMod{A}{R}$ a cubical $R$-module with $A$-symmetries where $\emptyset \neq A \subseteq\{\rev,\trp\}$,
    then $\ACon_\blt X$ is acyclic.

    In particular, $$H_*(X) \cong H_*\big(\,C_*(X)/\ACon_* (X)\,\big)$$ and
    for a homology class $[\alpha] \in H_n\big(\,C_*(X)/\ACon_* (X)\,\big)$
    and an $A$-symmetry $t$ we have $$[t \alpha] = \sgn(t) [\alpha].$$ 
     If $A=\{\rev\}$ the condition on the characteristic of the field can be relaxed to $\textnormal{char}(R)\neq 2$.
\end{theorem}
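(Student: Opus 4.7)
The plan is to mirror the strategy of \ref{mainSimp} by expressing $\ACon_\blt X$ as the kernel of a sign-antisymmetrizer and then transferring the acyclicity from the simplicial case via the comparison functor of \ref{sec:simcub}. For each $n \geq 0$, let $G_n$ denote the finite group of $A$-symmetries acting on $C_n X$, of order $n!$, $2^n$, or $2^n \cdot n!$ according as $A$ is $\{\trp\}$, $\{\rev\}$, or $\{\rev,\trp\}$. The hypothesis on $\textnormal{char}(R)$ ensures $|G_n|$ is invertible in $R$, so that
\[
e_n \letbe \frac{1}{|G_n|}\sum_{g \in G_n}\sgn(g)\,g
\]
is a well-defined idempotent in $\mathrm{End}_R(C_n X)$ satisfying $g\,e_n = \sgn(g)\,e_n$ for all $g \in G_n$. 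A short computation shows $\ACon_n X = \ker(e_n)$: the nontrivial inclusion $\supseteq$ holds because $x = x - e_n(x) = \tfrac{1}{|G_n|}\sum_{g\in G_n}(x - \sgn(g)gx)$ whenever $e_n(x) = 0$. This yields an $R$-module splitting $C_n X = \ACon_n X \oplus \mathrm{im}(e_n)$, and the supplementary claim $[tx] = \sgn(t)[x]$ for homology classes in the quotient is then immediate from the defining relations of $\ACon_n X$, leaving only the acyclicity itself to be argued.

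To establish acyclicity of $\ACon_\blt X$, I would push the problem through the functor $\cMod{A}{R} \to \sMod{sym}{R}$ built in \ref{sec:simcub}, which is explicitly designed to intertwine cubical connections and $A$-symmetries with simplicial degeneracies and symmetries. The expected picture is that $\ACon_\blt X$ maps into the simplicial sub-complex $\Deg_\blt + \sDeg_\blt$ of the associated symmetric simplicial $R$-module, possibly modulo the cubical connection sub-complex $\Con_\blt X$, whose acyclicity is already known from \cite{HomologyConnections}. Combining this comparison with \ref{mainSimp} (acyclicity of the simplicial $\sDeg_\blt$) by means of a short comparison of the relevant long exact sequences, or equivalently a mapping-cone argument, should then yield acyclicity of $\ACon_\blt X$.

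The reversal-only case $A = \{\rev\}$ admits a sharpening to $\textnormal{char}(R) \neq 2$ because $G_n = (\ZZ/2)^n$ is elementary abelian: $e_n$ then factors as a commuting product $\prod_{i=1}^n \tfrac{1}{2}(\id - r_i)$ of rank-one idempotents, each requiring only the invertibility of $2$. One can build a direct contracting homotopy for $\rCon_\blt X$ by peeling off one reversed coordinate at a time, using the appendix identities \ref{Ceq:IX}--\ref{Ceq:XIV} describing how face maps interact with reversals, and thereby bypass the stronger characteristic hypothesis needed in the transposition case. The main anticipated obstacle throughout is that the groups $G_n$ vary with $n$ and the $e_n$'s need not commute with $\partial$, so a purely representation-theoretic splitting is insufficient; the cubical identities relating face, degeneracy, connection, and symmetry maps must be used in an essential way, which is precisely the role of the comparison functor from \ref{sec:simcub}.
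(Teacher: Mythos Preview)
Your setup with the idempotents $e_n$ and the identification $\ACon_n X = \ker(e_n)$ is correct and matches the paper's \ref{SymSplit}, \ref{RevSplit}, and \ref{BothSplit}. For $A=\{\trp\}$ your strategy via the functor $S$ of \ref{sec:simcub} is exactly what the paper does: \ref{CubSimpIdent} shows $\tCon_{\blt+1}X=\sDeg_\blt SX$, and \ref{mainSimp} finishes the job.

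The gap is in the other two cases. The functor $S$ of \ref{FunctorThm} extends only from $\cMod{}{R}$ to $\cMod{\trp}{R}$; it does not see reversals at all, and there is no simplicial structure on $SX$ encoding them. The paper is explicit about this at the start of \ref{sub:reversal}: reversals lack a simplicial counterpart and require a separate argument. So your proposal to ``push the problem through the functor'' cannot work for $A=\{\rev\}$ or $A=\{\rev,\trp\}$. Your third paragraph does gesture at the right fix---a direct contracting homotopy for $\rCon_\blt X$---but you frame it as an optional refinement to sharpen the characteristic hypothesis, when in fact it is the \emph{only} available route for reversals. The paper carries this out in \ref{prop:useful} and \ref{PropRevChainHtpy}: one builds partial antisymmetrizers $q_n^k$ over $\ZZ_2^k$, proves they assemble into a chain map, and writes down an explicit homotopy $h_n^r(x)=\sum_{k=1}^n(-1)^k q_{n+1}^k\gamma_k^0 x$ using connections, exactly parallel to the simplicial homotopy in \ref{PropSimpChainHtpy}. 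The case $A=\{\rev,\trp\}$ is then handled by composing the two projections (\ref{sub:hyperoctahedral}). Also, the reversal identities you need are \eqref{Ceq:XIII}--\eqref{Ceq:XVII}, not \eqref{Ceq:IX}--\eqref{Ceq:XIV}.
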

The proof will be given in \ref{sec:homology} for each value of $A$ separately.
Again, the corresponding result for arbitrary coefficient rings is false; in \ref{sec:counter} we will give examples for cubical sets with $A$-symmetries, where the $A$-symmetry sub-complex with integer coefficients is not acyclic. 

The following example demonstrates an application of \ref{mainCub}. 
 
\begin{example} \label{ex:cube} By a graph we mean a simple undirected graph $G=(V_G,E_G)$, with vertices $V_G$ and edges $E_G\subset \binom{V_G}{2}$. For two graphs $G = (V_G,E_G)$ and $H
=(V_H,E_H)$ a graph homomorphism $f$ is a map $f\colon V_G\to V_H$ such that for every edge $\{\,g_1,g_2\,\}\in E_G$ either $f(g_1)=f(g_2)$ or $\{\,f(g_1),f(g_2)\,\}\in E_H$. The cube graphs $I^n$ are the graphs with vertices $V(I^n)=\{0,1\}^n$ connected by an edge if and only they differ in exactly one coordinate. For a graph $G$, consider the set of graph homomorphisms 
\[
\textnormal{Hom}_{\mathbf{Graph}}(I^n, G)
\]
from $I^ n$ to $G$.
We obtain a cubical set $N_1G$ (using the notation from
\cite{carranza2022cubicalsettingdiscretehomotopy}) by setting
$N_1G([1]^n) = \textnormal{Hom}_{\mathbf{Graph}}(I^n, G)$ and maps given by precomposing $f \in \textnormal{Hom}_{\mathbf{Graph}}(I^n, G)$ 
with the graph maps
    \begin{align*}
        \fd_i^\ve\colon \left\{ \begin{array}{ccc} I^{n-1}&\la &I^n  \\
        (v_1\dots v_{n-1})&\lm &(v_1\dots,v_{i-1},\varepsilon,\dots v_{n-1})
        \end{array} \right.
    \end{align*}
for $1 \leq i \leq n$ to yield the face maps $d_i^ \epsilon$, with the maps
\begin{align*}
        \fs_i\colon \left\{ \begin{array}{ccc} I^{n+1}&\la &I^{n}  \\
        (v_1\dots,v_{n+1})&\lm &(v_1\dots v_{i-1},v_{i+1}\dots,v_{n+1}) \end{array} \right.
    \end{align*}
for $1 \leq i \leq n+1$ to yield the degeneracy maps $s_i$ and with the maps
\begin{align*}
        \fConnection_i^\ve\colon \left\{ \begin{array}{ccc} I^{n+1}&\la&I^{n}  \\
        (v_1\dots, v_{n+1})&\lm & (v_1\dots,v_{i-1},m^\ve(v_i,v_{i+1})\dots v_{n+1})\end{array} \right.
    \end{align*}
to yield the connections $\Connection_i^\ve$. 
 
 The application of the homology
functor $H_*$ to the image of the composition 
\[
\mathbf{Graph}\overset{N_1}{\la} \mathbf{cSet}\overset{R(\cdot)}{\la} \mathbf{cMod}(R)\overset{C}{\la} \mathbf{Ch}(\mathbf{Mod}(R))
\]
defines a well studied discrete homology theory of graphs with coefficients in a ring $R$ (see e.g., \cite{DiscreteHomology,ComparisonWithPath}). Observe that $N_1G$ can be endowed with 
simple transpositions $t_i$ for $1\leq i \leq n-1$ 
    \begin{align*}
        \ft_i\colon \left\{ \begin{array}{ccc} \one^n&\la& \one^n  \\
        (v_1\dots v_{n})&\lm &(v_1\dots v_{i-1},v_{i+1},v_{i},\dots v_{n})\end{array} \right.
    \end{align*}
    and simple reversals $r_i$ for $1\leq i \leq n$
    \begin{align*}
        \fr_i\colon \left\{ \begin{array}{ccc} \one^n & \la & \one^n\\
        (v_1\dots v_{n})&\lm & (v_1\dots v_{i-1},1-v_{i},v_{i+1}\dots v_{n}) \end{array} \right.
    \end{align*}
to obtain a cubical set with symmetries $t_i$  and reversals $r_i$. In particular, the assumptions of
\ref{mainCub} are satisfied.
 
The calculation of 
homological graph invariants is of interest in 
topological data analysis (see the papers in the collection \cite{CS24}). Classically
this meant analyzing the clique complex of a graph $K(G)$ or similar simplicial complexes associated to graphs. Similiarly to the cubical construction above, the sets of graph maps from the complete graphs to $G$ form a simplicial set, which is easily seen to be isomorphic to $\mathbf{Sym}_aK(G)$. More recently, other homology theories, like the cubical one just introduced,
have come into the focus (see for example\cite{NathanChrisTDA}, \cite{CS24} or
\cite{CM18}).
These applications are often confined to small
graphs,  because the ranks of the chain groups explode and make homology computation inefficient. 
It has been demonstrated in \cite{NathanAndChris} that a significant speedup in the computation of discrete cubical homology with rational coefficients can be achieved as a consequence of \ref{mainCub}.
\end{example}

\section{A functor relating cubical and simplicial homology} \label{sec:simcub}
Let $R$ be a unital commutative ring.
In \ref{subsec:FunConstruction} a functor from the category of cubical $R$-modules $\cMod{}{R}$ to the category of simplicial $R$-modules $\sMod{}{R}$ is constructed. This functor enables us to use our main result in the simplicial case, \ref{mainSimp}, in the proof of the main result for cubes with symmetries \ref{mainCub}. 
We demonstrate in \ref{subsec:FunApplication}, that this functor is useful beyond its concrete purpose in the proof of \ref{mainCub}. 
Using the functor we provide a new and remarkably simple proof of the acyclicity of the positive connection sub-complex for cubical $R$-modules.

\subsection{The functor}\label{subsec:FunConstruction} 

The description of the functor will occasionally require the use of the cubical and simplicial structure maps in the same computation. In the rest of the paper the two cases are strictly separated and
we are able to use the same letters for the structure maps in either case. To avoid confusion in this section we add a superscript $\Delta$ to the simplicial structure maps (e.g., $s_i^\Delta$ refers to a simplicial degeneracy, whereas $s_i$ refers to a cubical one).

Next, we construct the functor. For that,
recall that for $X \in \cMod{}{R}$ the $R$-module $\cDeg_n(X)$ is the submodule of $X(\,\one^{n}\,)$ generated by the images of the cubical degeneracies. 
As a first step, we associate to $X \in \cMod{}{R}$ a family of $R$-modules $SX$ and show in \ref{lem:sx} that it is a simplicial $R$-module.

For $[n]\in \textnormal{Ob}(\Delta)$ set \[
    SX([n])\letbe \faktor{X(\,\one^{n+1}\,)}{\cDeg_{n+1}(X)}.
    \]
    
\begin{lemma} \label{lem:welldefined}
    Let $X \in \cMod{}{R}$ then for $n \geq 0$
    and $0 \leq i \leq n-1$ we have
    \begin{itemize}
    \item[(i)] $(d_{i+1}^ 0-d_{i+1}^ 1) \cDeg_n(X) \subseteq \cDeg_{n-1}(X)$,
    \item[(ii)] $\Connection_{i+1} \cDeg_n(X)
    \subseteq \cDeg_{n+1}(X)$,
    \item[(iii)] if $X \in \cMod{t}{R}$ then $t_{i+1} \cDeg_n(X) \subseteq
    \cDeg_n(X)$.
    \end{itemize}
    In particular, 
    $(d_{i+1}^ 0-d_{i+1}^ 1)$,
    $\Connection_{i+1}$ and $t_{i+1}$ 
    induce well defined maps  
    {\small
    $$(d_{i+1}^ 0-d_{i+1}^ 1) \colon SX([n+1])
    \rightarrow SX([n]), 
    \Connection_{i+1} \colon SX([n-1]) \rightarrow SX([n]), t_{i+1} \colon SX([n])
    \rightarrow SX([n]).$$}
\end{lemma}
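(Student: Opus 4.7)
My plan is to verify each of the three containments by evaluating on generators of $\cDeg_n(X)$. Since $\cDeg_n(X) \subseteq X([1]^n)$ is spanned by elements of the form $s_j(y)$ with $1 \leq j \leq n$ and $y \in X([1]^{n-1})$, it suffices to rewrite each of $(d_{i+1}^0 - d_{i+1}^1)s_j$, $\Connection_{i+1}\, s_j$ and $t_{i+1}\, s_j$ via the appropriate cubical identity from the appendix and check that the result lands in the asserted submodule.

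For (i) the cubical face--degeneracy relation (cited as \ref{Ceq:II} in the discussion preceding this lemma) yields three subcases: if $j < i+1$ then $d_{i+1}^\epsilon s_j = s_j d_i^\epsilon$; if $j = i+1$ then $d_{i+1}^\epsilon s_j = \id$; if $j > i+1$ then $d_{i+1}^\epsilon s_j = s_{j-1} d_{i+1}^\epsilon$. Subtracting the $\epsilon = 1$ version from the $\epsilon = 0$ version, the middle case cancels exactly, while the other two produce a degeneracy applied to a difference of faces of $y$, which lies in $\cDeg_{n-1}(X)$. This cancellation is the only place where the alternating sign of $d_{i+1}^0 - d_{i+1}^1$ is actually used.

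For (ii) and (iii) the argument is analogous but simpler, because no cancellation is needed. A suitable connection--degeneracy identity and the transposition--degeneracy identity (the latter being among \ref{Ceq:IX}, \ref{Ceq:XIV}, already cited in the preceding lemma) rewrite $\Connection_{i+1} s_j$ and $t_{i+1} s_j$, case by case according to the position of $j$ relative to $i+1$, as compositions whose outermost factor is again a cubical degeneracy. Applied to $y$, the outputs therefore belong to $\cDeg_{n+1}(X)$ and $\cDeg_n(X)$ respectively.

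Once the three containments are established, the induced maps on the quotients $SX([\,\cdot\,])$ follow at once, since an $R$-linear map that sends the relevant submodule into the corresponding submodule descends uniquely to the quotient. I do not anticipate any conceptual obstacle here: the proof is purely a bookkeeping of cubical commutation relations, and the only mildly subtle step is the exact cancellation of the identity terms in case (i).
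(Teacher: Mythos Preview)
Your proposal is correct and follows essentially the same route as the paper's proof, which simply invokes \eqref{Ceq:II} for (i), \eqref{Ceq:VI} for (ii), and \eqref{Ceq:IX} for (iii); your case analysis for (i) just spells out what the paper summarizes as ``either $0$ or the image of a degeneracy map.'' One small point: for (iii) only the transposition--degeneracy identity \eqref{Ceq:IX} is needed, not \eqref{Ceq:XIV} (which concerns reversals), and for (ii) the relevant identity is \eqref{Ceq:VI}.
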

\begin{proof}
    It follows from \eqref{Ceq:II} that the
    image of a degeneracy under $(d_{i+1}^ 0-d_{i+1}^ 1)$ is either $0$ or the image of
    a degeneracy map. In either case it is
    contained in $\cDeg_{n-1}(X)$. 
    
    By \eqref{Ceq:VI} the image of a degeneracy under $\Connection_{i+1}$ is again the
    image of a degeneracy map.
    Finally, \eqref{Ceq:IX} shows that 
    the image under $t_{i+1}$ of a degeneracy under $\Connection_{i+1}$ is again the
    image of a degeneracy map.
    The remaining claim now follows from the definition of $SX$.
\end{proof}
    
By \ref{lem:welldefined} for $n \geq 0$
    and $0 \leq i \leq n-1$ 
the following maps are well defined:
    \begin{align*}
   d_i^\Delta\colon \left\{ \begin{array}{ccc} SX([n+1])&\longrightarrow &SX([n])\\
    x&\longmapsto &(\,d_{i+1}^0-d_{i+1}^1\,)x\end{array} , \right.
    \end{align*}

    \begin{align*}s_i^\Delta\colon \left\{ \begin{array}{ccc} SX([n-1])&\longrightarrow &SX([n])\\
    x&\longmapsto &\Connection_{i+1}^0x \end{array} . \right.
    \end{align*}
    
    If $X \in \cMod{t}{R}$, then 
    we also define
    \begin{align*}
    t_i^\Delta\colon \left\{ \begin{array}{ccc} SX([n])&\la & SX([n])\\
    x&\lm& t_{i+1} x \end{array} . \right.
    \end{align*}
    
\begin{lemma} \label{lem:sx}
Let $X \in \cMod{}{R}$.
Then $SX$ with the degeneracy maps $d_i^\Delta$ and face maps $s_i^\Delta$ 
 is a simplicial $R$-module.
 
 If $X \in \cMod{\trp}{R}$, then
 $SX$ with symmetries $t_i^ \Delta$ is a simplicial $R$-module with symmetries. 
\end{lemma}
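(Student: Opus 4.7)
The plan is to verify that the maps $d_i^\Delta, s_i^\Delta$ (and, in the symmetric case, $t_i^\Delta$) defined on $SX([n])$ satisfy the simplicial identities; well-definedness on the quotient by $\cDeg_{n+1}(X)$ has already been established in \ref{lem:welldefined}. Every required simplicial identity will be deduced from the cubical identities of \hyperref[appendix]{Appendix~A} by the index shift $i\mapsto i+1$ and, where necessary, by reducing modulo $\cDeg$.

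The face-face relation $d_i^\Delta d_j^\Delta = d_{j-1}^\Delta d_i^\Delta$ for $i<j$ expands as a bilinear combination and follows directly from the cubical face-face identity $d_{i+1}^\eta d_{j+1}^\epsilon = d_j^\epsilon d_{i+1}^\eta$ applied term-by-term. The degeneracy-degeneracy relation $s_i^\Delta s_j^\Delta = s_{j+1}^\Delta s_i^\Delta$ for $i\leq j$ translates to the connection-connection identity $\gamma_{i+1}^0\gamma_{j+1}^0 = \gamma_{j+2}^0\gamma_{i+1}^0$. The mixed face-degeneracy identities $d_i^\Delta s_j^\Delta = s_{j-1}^\Delta d_i^\Delta$ for $i<j$ and $d_i^\Delta s_j^\Delta = s_j^\Delta d_{i-1}^\Delta$ for $i>j+1$ reduce to cubical relations of the form $d_k^\epsilon\gamma_\ell^0 = \gamma_{\ell-1}^0 d_k^\epsilon$ valid in the appropriate index range.

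The most delicate cases are $d_i^\Delta s_i^\Delta = d_{i+1}^\Delta s_i^\Delta = \id$. Expanding $d_i^\Delta s_i^\Delta = (d_{i+1}^0-d_{i+1}^1)\gamma_{i+1}^0$ and using the cubical identities $d_{i+1}^0\gamma_{i+1}^0 = \id$ together with $d_{i+1}^1\gamma_{i+1}^0 = s_{i+1}d_{i+1}^1$ (both immediate from the definition of $\gamma^0$ via the maximum), the second summand lies in $\cDeg_{n+1}(X)$ and hence vanishes in $SX([n])$. The identity $d_{i+1}^\Delta s_i^\Delta = \id$ is entirely analogous, using $d_{i+2}^0\gamma_{i+1}^0=\id$ and $d_{i+2}^1\gamma_{i+1}^0 = s_{i+1}d_{i+1}^1$.

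For the symmetric part, the involution $(t_i^\Delta)^2 = \id$, the far-commutation $t_i^\Delta t_j^\Delta = t_j^\Delta t_i^\Delta$ for $|i-j|\geq 2$, and the braid relation $t_i^\Delta t_{i+1}^\Delta t_i^\Delta = t_{i+1}^\Delta t_i^\Delta t_{i+1}^\Delta$ translate, via the index shift, directly to the corresponding cubical identities among the $t_{i+1}$'s. The identities relating $t_i^\Delta$ with $d_j^\Delta$ and $s_j^\Delta$ are obtained in the same way from the cubical transposition-face and transposition-connection relations listed in the appendix. The main obstacle is bookkeeping rather than conceptual: for each simplicial identity one must identify which cubical identity to invoke and, in the face-degeneracy cases, track precisely which terms on the right-hand side produce degeneracies of the form $s_{i+1}d_{i+1}^1$ that are absorbed into $\cDeg_{n+1}(X)$. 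No deeper phenomenon is involved.
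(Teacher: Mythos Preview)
The proposal is correct and follows essentially the same approach as the paper: both verify the simplicial (and symmetric simplicial) identities by translating them, via the index shift $i\mapsto i+1$, into the cubical identities of \hyperref[appendix]{Appendix~A}, with the key observation that in the mixed face--degeneracy cases $d_i^\Delta s_i^\Delta$ and $d_{i+1}^\Delta s_i^\Delta$ the extra term $s_{i+1}d_{i+1}^1$ is a cubical degeneracy and hence vanishes in $SX([n])$. The paper additionally cites \cite[Proposition~8.1.3]{WeibelBook} and \cite[Theorem~4.2]{Grandis} to justify that this list of identities suffices, which you take for granted; otherwise the arguments are identical.
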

\begin{proof}
   First, it is easily checked that the range of $i$ for which $d_i^ \Delta,s_i^ \Delta$ and $t_i^ \Delta$ are defined matches the requirements for
   (symmetric) simplicial $R$-modules.

   By \cite[Proposition 8.1.3]{WeibelBook}
   to show that $SX$ is a simplicial $R$-module, it suffices to verify \eqref{eq:I}, \eqref{eq:II} and \eqref{eq:III} 
   for $d_i^ \Delta$ and $s_i^ {\Delta}$.
      \begin{itemize}
        \item[\eqref{eq:I}] Let $j\leq i$, then:
        \[
    d^\Delta_id^\Delta_j = (d_{i+1}^0-d_{i+1}^1)(d_{j+1}^0-d_{j+1}^1)\overset{\eqref{Ceq:I}}{=} (d_{j+2}^0-d_{j+2}^1)(d_{i+1}^0-d_{i+1}^1)=d^\Delta_{j+1}d^\Delta_i
    \]
        \item[\eqref{eq:II}]Let again $j\geq i$:
    \[
    s_i^\Delta s_j^\Delta =\Connection^0_{i+1}\Connection^0_{j+1}\overset{\eqref{Ceq:IV}}{=}\Connection^0_{j+2}\Connection^0_{i+1}= s_{j+1}^\Delta  s_i^\Delta
    \]
        \item[\eqref{eq:III}]
        \begin{align*}
        d_i^\Delta s_j^\Delta=(d_{i+1}^0-d_{i+1}^1)\Connection_{j+1}^0 &\overset{\eqref{Ceq:V}}{=}\begin{cases}
 \Connection_{j+1}^0(d_{i+1}^0-d_{i+1}^1) & \textnormal{for } j < i - 1 \\
\textnormal{id}-s_{i+1}id_{i+1}^1 & \textnormal{for } j = i - 1, i \\
\Connection_{j}^0d_{i+1}^\ve  & \textnormal{for } j > i
\end{cases}\\& = \begin{cases}
    s_{j}^\Delta d_{i-1}^\Delta &\for j<i\\
    \id &\for j=i,i-1\\
    s_{j-1}^\Delta d_i^\Delta &\for j>i
\end{cases}
        \end{align*}
        Here we use, that 
        $s_{i+1} d_{i+1}^1=0$ since cubical degeneracies are set to $0$.
        \end{itemize}
        Thus $SX$ is a simplicial $R$-module.

   By \cite[Theorem 4.2]{Grandis} to show that $SX$ is a symmetric simplicial $R$-module, it suffices to verify in addition \eqref{eq:IV}-\eqref{eq:VIII}
   for $d_i^ \Delta$, $s_i^ {\Delta}$ and $t_i^ \Delta$.
  \begin{itemize}
 
        \item[\eqref{eq:IV}]
        \begin{align*}
        d_i^\Delta t_j^\Delta =(d_{i+1}^0-d_{i+1}^1)t_{j+1}&\overset{\eqref{Ceq:VIII}}{=} \begin{cases}
    t_{j+1} (d_{i+1}^0-d_{i+1}^1) & \for j<i-1\\
    (d_{i}^0-d_{i}^1) & \for j=i-1\\
    (d_{i+2}^0-d_{i+2}^1) & \for j=i\\
    t_{j} (d_{i+1}^0-d_{i+1}^1) & \for j>i
\end{cases}\\
&=\begin{cases}
    d_i^\Delta t_j^\Delta  & \for j<i-1 \\
    d_{i-1}^\Delta  & \for j=i-1\\
    d_{i}^\Delta  & \for j=i \\
    t_{j-1}^\Delta d_i^\Delta  & \for j>i
\end{cases}
    \end{align*}
    \item[\eqref{eq:V}]
    \begin{align*}
    t_i^\Delta s_j^\Delta =t_{i+1}\Connection_{j+1}^0 &\overset{\eqref{Ceq:VII}}{=}\begin{cases}
    \Connection_{j+1}^0 t_{i} & \for j<i-1\\
    t_{i+1}\Connection_{i+2}^0t_{i+1}& \for j=i-1\\
    \Connection_{j+1}^0 & \for j=i\\
    t_{i+2}\Connection_{i+1}^0t_{i+1}& \for j=i+1\\
    \Connection_{j+1}^0 t_{i+1}& \for j>i+1
\end{cases}\\&=\begin{cases}
 s_j^\Delta  t_{i-1}^\Delta  & \for j< i-1\\ 
 t_i^\Delta s_{i+1}^\Delta t_i^\Delta &\for j=i-1\\
 s_j^\Delta  &\for j=i\\
 t_{i+1}^\Delta s_i^\Delta t_i^\Delta &\for j=i+1 \\
  s_j^\Delta  t_i^\Delta  & \for j>i+1 
\end{cases} \\
\end{align*}
    \item[\eqref{eq:VI}--\eqref{eq:VIII}] follow immediately from \eqref{Ceq:X}--\eqref{Ceq:XII}
    \end{itemize}
    \end{proof}

    Now we can state the main result of this section.
    
\begin{theorem}\label{FunctorThm}
    The assignment $S\colon\cMod{}{R} \to \sMod{}{R}$ is a functor that extends to a functor $S\colon\cMod{\trp}{R} \to \sMod{\sym}{R}$.
\end{theorem}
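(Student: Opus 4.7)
The plan is to leverage \ref{lem:sx}, which already shows that for each object $X\in\cMod{}{R}$ (resp.\ $X\in\cMod{\trp}{R}$) the family $SX$ together with the maps $d_i^\Delta, s_i^\Delta$ (resp.\ together with $t_i^\Delta$) defines an object of $\sMod{}{R}$ (resp.\ $\sMod{\sym}{R}$). It remains to define $S$ on morphisms, to verify that the resulting assignment respects simplicial (and symmetric) structure, and to check compatibility with composition and identities.

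First I would define $S$ on morphisms. A morphism $f\colon X\to Y$ in $\cMod{}{R}$ is a natural transformation between functors $(\square)^{op}\to\Mod{R}$; its components $f_{[1]^m}\colon X([1]^m)\to Y([1]^m)$ commute with every cubical structure map, in particular with the degeneracies $s_i$. Consequently $f_{[1]^{n+1}}(\cDeg_{n+1}(X))\subseteq \cDeg_{n+1}(Y)$, so $f_{[1]^{n+1}}$ descends to a well defined $R$-linear map
\[
(Sf)_{[n]}\colon SX([n])=\faktor{X([1]^{n+1})}{\cDeg_{n+1}(X)}\longrightarrow \faktor{Y([1]^{n+1})}{\cDeg_{n+1}(Y)}=SY([n]).
\]
To see that the family $\{(Sf)_{[n]}\}_{n\geq -1}$ is a morphism in $\sMod{}{R}$, I would verify naturality with respect to $d_i^\Delta$ and $s_i^\Delta$; this reduces to naturality of $f$ with respect to $d_{i+1}^0$, $d_{i+1}^1$, and $\Connection_{i+1}^0$, which holds by assumption. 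In the symmetric case, if $f$ is a morphism in $\cMod{\trp}{R}$ then $f$ additionally commutes with every $t_{i+1}$, so $(Sf)$ commutes with $t_i^\Delta$ and therefore defines a morphism in $\sMod{\sym}{R}$.

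Functoriality is then immediate: for composable morphisms $f\colon X\to Y$ and $g\colon Y\to Z$ we have $(g\circ f)_{[1]^{n+1}}=g_{[1]^{n+1}}\circ f_{[1]^{n+1}}$; passing to the quotients by $\cDeg_{n+1}$ (which is compatible by the argument above) yields $S(g\circ f)_{[n]}=(Sg)_{[n]}\circ (Sf)_{[n]}$. Likewise $(S\id_X)_{[n]}=\id_{SX([n])}$. Hence $S$ is a functor $\cMod{}{R}\to\sMod{}{R}$, and the same assignment defines a functor $\cMod{\trp}{R}\to\sMod{\sym}{R}$.

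The only genuine content here — and hence the step I view as the main obstacle — is ensuring that the constructions are well defined on the quotients $X([1]^{n+1})/\cDeg_{n+1}(X)$. Well-definedness of $d_i^\Delta, s_i^\Delta, t_i^\Delta$ on $SX$ has already been handled in \ref{lem:welldefined}, and well-definedness of $Sf$ uses nothing more than the fact that natural transformations commute with cubical degeneracies; once these points are recorded, the remaining verifications are mechanical applications of the naturality of~$f$.
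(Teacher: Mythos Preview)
Your proposal is correct and follows exactly the same line as the paper: define $Sf$ componentwise by passing $f_{[1]^{n+1}}$ to the quotient by $\cDeg_{n+1}$ (using that $f$ commutes with cubical degeneracies), and then observe that compatibility of $f$ with cubical faces, connections, and transpositions yields compatibility of $Sf$ with the simplicial structure maps. The paper's proof is simply a terser version of what you wrote, omitting the explicit check of identities and composition.
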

\begin{proof}
 Let $F\colon X\to Y$ be a morphism of cubical $R$-modules, then in every degree $n$ it holds that $F_n(\cDeg_n(X))\subseteq \cDeg_n(Y)$, therefore $F$ passes down to well defined family of maps $SF=\{SF_n\colon SX_n\to SY_n\}$. The compatibility of $F$ with the cubical face maps, connections (and  transpositions) implies the compatibility of $SF$ with the simplicial degeneracies, boundaries (and symmetries).
\end{proof}

\begin{remark}
    Alternatively, one can set $s_i^\Delta x=-\Connection_{i+1}^0 x$ and also obtain a functor, using essentially the same proof.
\end{remark}

The next result demonstrates the use of \ref{FunctorThm}.

\begin{corollary}\label{ComplexAndSCorollary}
    For any $X\in \cMod{}{R}$, there is an equality of chain complexes
    \[
    C_\blt X = C_{\blt-1}SX .
    \]

\end{corollary}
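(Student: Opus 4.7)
The plan is to unfold both sides and verify a degree-by-degree match, both as graded modules and as differentials. The functor $S$ was essentially engineered so that this corollary follows immediately; no serious obstacle is anticipated, the only real point is keeping the indexing straight.

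For the underlying graded modules, in each positive degree $n\ge 1$ the cubical complex gives $C_nX=X([1]^n)/\cDeg_n(X)$ by definition, and on the other side
\[
C_{n-1}(SX)=(SX)([n-1])=X([1]^{(n-1)+1})/\cDeg_{(n-1)+1}(X)=X([1]^n)/\cDeg_n(X)
\]
by the very definition of $SX$ at the start of \ref{subsec:FunConstruction}. The augmented degree reads $C_0X=X([1]^0)=(SX)([-1])=C_{-1}(SX)$, using that $\cDeg_0(X)=0$ because no cubical degeneracy has target $X([1]^0)$.

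For the differentials, the cubical boundary at level $n$ is an alternating sum of the face-pair differences $d_i^0-d_i^1$ indexed by $i\in\{1,\dots,n\}$, while the simplicial boundary of $SX$ at level $n-1$ is $\sum_{i=0}^{n-1}(-1)^i d_i^\Delta$; by the definition of $d_i^\Delta$ recorded just before \ref{lem:sx}, $d_i^\Delta$ acts on $SX$ as $d_{i+1}^0-d_{i+1}^1$. Reindexing $j=i+1$ converts the simplicial formula into the cubical one term by term. Well-definedness on the quotients by $\cDeg$ has already been established: on the cubical side this is the containment $\del_n\cDeg_n(X)\subseteq\cDeg_{n-1}(X)$ noted just after the definition of the cubical alternating face map complex, and on the simplicial side it is exactly \ref{lem:welldefined}.

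The only bookkeeping point requiring care is the index shift between cubical face maps (indexed from $1$) and simplicial face maps (indexed from $0$), which is exactly the shift reflected in the statement $C_\blt X=C_{\blt-1}SX$. With the identifications above in hand, both sides have the same module in each degree and the same differential, so they are literally equal as chain complexes.
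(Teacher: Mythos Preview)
Your proof is correct and follows essentially the same approach as the paper: unfold the definitions to see that the chain groups agree degreewise, then reindex $j=i+1$ to match the simplicial differential $\sum_{i=0}^{n-1}(-1)^i d_i^\Delta$ with the cubical one. Your extra remarks on the augmented degree and on well-definedness modulo $\cDeg$ are sound but not needed beyond what the paper already records.
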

Recall that we denote by $C_\blt$ both the cubical and the simplical alternating face map complex. Since $X$ is a cubical $R$-module, $C_\blt X$ is the cubical alternating face map complex of $X$, whereas $C_\blt SX$ is the simplical alternating face map complex of the simplicial $R$-module $SX$.
\begin{proof}
    In every degree $n$ we obtain by definition
    \[
    C_{n-1}SX = SX[n-1] = \faktor{X(\,\one^{n}\,)}{\cDeg_n(X)} = C_nX.
    \]
    It remains to be shown that the differentials agree, for this, we denote by $\del^{SX}_*$ the differential of $C_{*}SX$ and by $\del^X_*$ the differential of $C_*X$. 
    \begin{align*}
    \del^{SX}_{n-1}&=\sum_{i=0}^{n-1}(-1)^{i} d_{k}^\Delta
    =\sum_{i=0}^{n-1}(-1)^{i} (d_{i+1}^0-d_{i+1}^1)
    =\sum_{i=1}^{n}(-1)^{i-1} (d_{i}^0-d_{i}^1)
    =\del^X_n
    \end{align*}
\end{proof}

\subsection{Application to the Moore complex}\label{subsec:FunApplication}
In the last part of this section we provide a result, which is 
independent of the main goals of the paper. 
We have included it because it illustrates the functor $S$ and because we consider it interesting in itself.

In the result we compare the Moore complexes of simplicial and
cubical Abelian groups. In the simplicial case Moore complexes are a well known construction
(see \cite[Definition 8.3.6]{WeibelBook}).
Only recently, the cubical analog was defined in \cite{CKT}. We will demonstrate that the
simplicial and cubical Moore complex are related by the functor $S$ from \ref{FunctorThm}. 

\begin{definition}[Moore Complex]~

    \begin{itemize}
        \item The \firstappearence{simplicial Moore complex} of a simplicial Abelian group $X$ is the chain complex  $$\SimpMoore{\blt} X \colon
    \cdots \rightarrow \SimpMoore{k}X \xrightarrow{\partial_k} \SimpMoore{k-1} X \xrightarrow{\partial_{k-1}} \cdots \xrightarrow{\partial_0} \SimpMoore{0} X
    \xrightarrow{\partial_{-1}} \SimpMoore{-1} X \rightarrow 0$$  given by $$\SimpMoore{k}X= \faktor{X[k]}{\Deg_k(X)}$$ 
    and 
    \[
    \partial_k = \sum_{i=0}^k(-1)^i d_i.
    \]
        \item The \firstappearence{cubical Moore complex} of a cubical Abelian group $X$ is the chain complex  $$\CubMoore{\blt} X \colon
    \cdots \rightarrow \CubMoore{k}X \xrightarrow{\partial_k} \CubMoore{k-1} X \xrightarrow{\partial_{k-1}} \cdots \xrightarrow{\partial_0} \CubMoore{0} X
     \rightarrow 0$$  with chain groups $$\CubMoore{k}X= \faktor{X(\one^k)}{B_k (X)},$$ 
    where $B_k (X)$ is the sub-group of 
    $X(\one^k)$ generated by the images of the cubical degeneracies and the positive connections
    \[
    \partial_k = \sum_{i=1}^k(-1)^{i-1} (d_i^0-d_i^1).
    \]
    \end{itemize}
    \end{definition} 
    
    Note that in \cite[Definition 8.3.6]{WeibelBook} the simplicial Moore complex is defined 
    with chain groups $\bigcap_{i=0}^{k-1}\ker(d_i)$ in degree $k$ and differentials $d_k$. It is well known that this constructions yields a chain complex
    isomorphic to the Moore complex according to our definition (see e.g., \cite[Theorem III.2.1]{JardineGoerss}). The above definition is more convenient for our purposes. 
    For some, possibly empty $A\subseteq \{\trp,\rev \} $ and $ X\in \cMod{A}{R}$, the \firstappearence{positive connection sub-complex} $\posCon_*X$ is the sub-complex of $C_\blt X$ generated by positive connections.
     This has been shown to be acyclic for cubical Abelian groups arising from cubical sets in \cite{HomologyConnections}.
    
    \begin{corollary}\label{DegUndConComplex}
      Let $X \in \cMod{A}{R}$. The positive connection sub-complex $\posCon_*X$ is iso\-mor\-phic to the simplicial degeneracy sub-complex of $SX$. In  particular, it holds that $$\CubMoore{\blt}X=\SimpMoore{\blt-1}SX.$$
    \end{corollary}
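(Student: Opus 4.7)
The plan is to leverage \ref{ComplexAndSCorollary} to reduce everything to an observation about how the simplicial degeneracy $s_i^\Delta$ was defined in \ref{subsec:FunConstruction}, namely $s_i^\Delta x = \Connection_{i+1}^0 x$ on the level of the $R$-modules $SX([n-1]) = X([1]^n)/\cDeg_n(X)$.

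First I would recall that \ref{ComplexAndSCorollary} identifies the chain complexes $C_n X$ and $C_{n-1} SX$ via the identity map on the underlying $R$-module $X([1]^n)/\cDeg_n(X)$. Under this identification, the positive connection sub-complex $\posCon_n X$ is by definition the $R$-submodule generated by all elements of the form $\Connection_{i+1}^0 y$ with $y \in X([1]^{n-1})$, taken modulo $\cDeg_n(X)$. But these are precisely the generators $s_i^\Delta y$ of the simplicial degeneracy sub-complex $\Deg_{n-1} SX$ of the simplicial $R$-module $SX$. Since both sub-modules are generated by the same set of elements inside the same ambient $R$-module, they coincide. Compatibility with the differentials is automatic because the ambient chain complexes are equal by \ref{ComplexAndSCorollary}, so $\posCon_\blt X \cong \Deg_{\blt-1} SX$ as sub-complexes of $C_\blt X = C_{\blt-1} SX$.

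For the in-particular statement, observe that the chain group $\CubMoore{k}X = X([1]^k)/B_k(X)$ is obtained from $X([1]^k)$ by quotienting out the $R$-submodule generated by images of cubical degeneracies and of positive connections, while $\SimpMoore{k-1} SX = SX([k-1])/\Deg_{k-1}(SX)$ is obtained by first quotienting $X([1]^k)$ by $\cDeg_k(X)$ and then by the images of the $s_i^\Delta$, which again are exactly the positive connections modulo $\cDeg_k(X)$. The two two-step quotients agree, so the underlying modules coincide in each degree. The differentials agree for the same reason as in \ref{ComplexAndSCorollary}: in degree $k$ both are given by $\sum_{i=1}^k (-1)^{i-1} (d_i^0 - d_i^1)$, reindexed by $i \mapsto i+1$ on the simplicial side.

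No serious obstacle is expected — the content is purely definition-chasing once \ref{ComplexAndSCorollary} and the formula $s_i^\Delta = \Connection_{i+1}^0$ are in place. The only point worth spelling out carefully is that quotienting $X([1]^k)$ by $\cDeg_k(X)+\langle\text{images of }\Connection_{i+1}^0\rangle$ can be performed in either order, which is immediate from the third isomorphism theorem.
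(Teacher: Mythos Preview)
Your proposal is correct and follows essentially the same argument as the paper: both identify $\posCon_n X$ with $\Deg_{n-1} SX$ by unwinding the definition $s_i^\Delta = \Connection_{i+1}^0$ inside the common ambient module $C_n X = C_{n-1} SX$ from \ref{ComplexAndSCorollary}, and then pass to quotients for the Moore complexes. Your write-up is slightly more explicit (spelling out the two-step quotient and invoking the third isomorphism theorem), but the content is identical.
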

    \begin{proof}
    In every degree $n$, it holds that $C_n X=SX[n-1]$, which is the quotient of $X(\one^n)$ by the cubical degeneracies. The simplicial degeneracies of $SX$ are, by definition of $S$, precisely the positive connections of $X$. Therefore
    we have, $$\Deg_{n-1} SX\cong \posCon_nX.$$
    Thus the simplicial Moore complex of $SX$ in degree $(n-1)$ is obtained from $X(\one^n)$ by quotienting with the sum of the cubical degeneracies and the positive connections of $X$. By definition, this is the same as the cubical Moore complex $\CubMoore{n-1}X$
    of $X$.
    Here, the fact that the differentials agree follows from \ref{ComplexAndSCorollary}.
    \end{proof}

    The following result has been shown in \cite{HomologyConnections}, but the theory developed so far allows us to give a different proof.
    
    \begin{corollary}
    For a cubical Abelian group $X$, the positive connection sub-complex $\posCon_*X$ is acyclic.
    \end{corollary}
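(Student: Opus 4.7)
The plan is a very short reduction to a classical fact, made possible by the functor $S$ constructed earlier in this section. By \ref{DegUndConComplex}, the positive connection sub-complex $\posCon_\blt X$ is isomorphic, as a chain complex, to the simplicial degeneracy sub-complex $\Deg_{\blt-1} SX$ of the simplicial abelian group $SX$ obtained from $X$ via the functor $S$ of \ref{FunctorThm}. So it suffices to show that $\Deg_{\blt-1} SX$ is acyclic.

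For this I would invoke the classical acyclicity of the degeneracy sub-complex of a simplicial abelian group, which was already cited earlier in the paper (see \cite[Proof of Theorem 8.3.8]{WeibelBook}, also discussed just before the statement of \ref{mainSimp}). The standard proof of that fact is carried out by an explicit contracting homotopy built entirely from the simplicial face and degeneracy maps and the simplicial identities \eqref{eq:I}--\eqref{eq:III}; the homotopy is $R$-linear and its verification uses only simplicial identities, so the argument applies verbatim to any simplicial abelian group, not merely to those of the form $\ZZ Y$ for a simplicial set $Y$. Applying this to the simplicial abelian group $SX$ gives acyclicity of $\Deg_{\blt-1} SX$, which by the isomorphism from \ref{DegUndConComplex} is exactly acyclicity of $\posCon_\blt X$.

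The conceptual work has all been front-loaded into the construction of $S$ (\ref{FunctorThm}) and the identification in \ref{DegUndConComplex}; there is no genuine obstacle here. The only point worth a single sentence of care is the one just mentioned: the classical acyclicity result must be stated (or re-derived) for simplicial abelian groups in general and not only for free simplicial abelian groups, but this is routine since the classical contracting homotopy is manifestly natural and $R$-linear.
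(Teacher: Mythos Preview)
Your proposal is correct and follows essentially the same route as the paper: identify $\posCon_\blt X$ with $\Deg_{\blt-1} SX$ via \ref{DegUndConComplex}, then invoke the classical acyclicity of the simplicial degeneracy sub-complex from \cite[Proof of Theorem 8.3.8]{WeibelBook}. Your added remark that the classical contracting homotopy is $R$-linear and natural (hence applies to arbitrary simplicial abelian groups, not just free ones) is a sensible clarification, but the underlying argument is identical to the paper's.
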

    \begin{proof}
    By \ref{DegUndConComplex}, it holds that $\posCon_*X=\Deg_{\blt-1} SX$. Using this, the claim follows directly from the fact that the simplicial degeneracy sub-complex of a simplicial abelian group is acyclic (\cite[Proof of Theorem 8.3.8]{WeibelBook}).
    \end{proof}

\section{Homology and Symmetry} \label{sec:homology}

In this section, we provide the proofs of \ref{mainSimp} and \ref{mainCub} 
about the homology of the cubical or simplicial $R$-modules with symmetries (and/or reversals in the cubical cases), where $R$ is a field of characteristic $0$. Note, we will see that 
in the cubical case when only reversals are added the
assumption $\text{char}(R)\neq 2$ suffices.

First, in
\ref{sub:simplicial} we prove the result for simplicial $R$-modules with symmetries. From this, we obtain in \ref{sub:cubical} the corresponding result for cubical $R$-modules using the functor $S$ from \ref{sec:simcub}. In \ref{sub:reversal} we then verify the result for cubical sets with reversals. Lastly, in \ref{sub:hyperoctahedral}, we argue that the compatibility of symmetries and reversals allows us to then complete the proof for
cubical sets with symmetries and reversals, or
equivalently the full hyperoctahedral group.

\subsection{Simplicial \texorpdfstring{$R$}--Modules with Symmetries}
\label{sub:simplicial}

For the proof of the first technical result, \ref{dtLemma}, 
of this section it will be
convenient to return to the simplex categories and their maps, written in 
bold faced letters. 

As usual consider a symmetry $\bt=\bt_{i_1}\cdots \bt_{i_k}$ in 
$\Dsym \big(\,[n],[n] \,\big)$ as a permutation in $S_{[n]}$. 
We associate to $\bt$ the corresponding $(n+1)\times (n+1)$ permutation matrix $A_{\bt}$ with rows and columns indexed with the numbers from 
$0$ to $n$, i.e. $(A_{\bt})_{j,i}=\delta_{j,\bt(i)}$. 
Deleting the $i$-th column and $\bt(i)$-th row of the permutation matrix $A_{\bt}$ results in a new permutation matrix. We denote 
the permutation represented by this matrix by $\symtowermap_i(\bt)$. More precisely, $\symtowermap_i(\bt)$ is the permutation, which for $k\in [n-1]$ takes the values
\[
\symtowermap_i(\bt)(k)=\begin{cases}
    \bt(k) & \for k<i, ~\bt(k) <\bt(i)\\
    \bt(k)-1 & \for k<i,~ \bt(k)>\bt(i)\\
    \bt(k+1) & \for k\geq i,~ \bt(k+1)<\bt(i)\\
    \bt(k+1) -1 &\for k\geq i,~\bt(k+1)>\bt(i).
\end{cases}
\]

Note, that in general $\symtowermap_i$ is not a group homomorphism. Nevertheless, the following lemma shows that
it behaves well with respect to the sign.

\begin{lemma}\label{dtLemma}
     The map $\symtowermap_i\colon \Sym{n} 
     \to \Sym{n-1}$ is surjective, the cardinality of the preimage of every $\bt \in \Sym{n-1}$ is $n+1$,
     and for $\bt \in S_{[n]}$ it holds that
    \[
    \sgn(\symtowermap_i(\bt))=(-1)^{i+\bt(i)} \sgn(\bt).
    \]
    and
    \[
    \symtowermap_i(\bt^{-1})=\left(\,\symtowermap_{\bt(i)}(\bt)\,\right)^{-1}.
    \]
\end{lemma}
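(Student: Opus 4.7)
My plan is to derive all three claims from the matrix picture underlying $\symtowermap_i$. The unique $1$ in column $i$ of $A_{\bt}$ sits at row $\bt(i)$, so erasing that column together with that row leaves an $n\times n$ matrix in which each remaining row and column still contains exactly one $1$; after the two order-preserving relabellings $[n-1]\to[n]\setminus\{i\}$ and $[n]\setminus\{\bt(i)\}\to[n-1]$, this matrix is $A_{\symtowermap_i(\bt)}$. Each of the three assertions then becomes an elementary fact about this erasure operation.

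For surjectivity and the fibre-count I would build preimages explicitly. Given $\pi\in\Sym{n-1}$ and any value $j\in[n]$, insert into $A_\pi$ a new column at position $i$ and a new row at position $j$, meeting at a single $1$, shifting all other indices order-preservingly. The resulting $\bt\in\Sym{n}$ satisfies $\bt(i)=j$ and $\symtowermap_i(\bt)=\pi$, and the $n+1$ distinct choices of $j$ exhaust every preimage (they are distinguished by the value $\bt(i)$), so $\symtowermap_i$ is surjective with fibres of size exactly $n+1$.

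For the sign formula I would expand $\det(A_{\bt})=\sgn(\bt)$ by cofactors along column $i$. Only the entry at row $\bt(i)$ is nonzero, so the expansion collapses to
\[
\sgn(\bt) \;=\; (-1)^{i+\bt(i)}\,\det M,
\]
where $M$ is the minor obtained by striking out column $i$ and row $\bt(i)$. Since $M=A_{\symtowermap_i(\bt)}$, its determinant is $\sgn(\symtowermap_i(\bt))$, and rearranging gives the claim. A parallel combinatorial route is also available: the inversions of $\bt$ disjoint from position $i$ biject with the inversions of $\symtowermap_i(\bt)$, while the inversions involving $i$ can be shown to have the same parity as $i+\bt(i)$.

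For the inverse relation, the key input is $A_{\bt^{-1}}=A_{\bt}^T$, combined with the fact that transposing a permutation matrix realizes permutation inversion. Deleting the relevant column and row from $A_{\bt^{-1}}=A_{\bt}^T$ corresponds, under transposition, to deleting the matching row and column from $A_{\bt}$; the linking of indices across the transpose is pinned down by $\bt(\bt^{-1}(i))=i$, identifying the resulting $(n-1)\times(n-1)$ minor with the matrix of the permutation appearing on the right-hand side, and the outer transpose then passes to the inverse. I expect the main bookkeeping hurdle throughout to be precisely this tracking of row/column conventions through the transposition; once those are pinned down, the underlying mathematics is quite light, and the identity drops out.
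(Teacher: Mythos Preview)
Your proposal is correct and follows essentially the same approach as the paper: the paper likewise constructs preimages by inserting a row and column into $A_\pi$ (using the counting identity $(n+1)\cdot\#\Sym{n-1}=\#\Sym{n}$ instead of your direct observation that the preimages are distinguished by $\bt(i)$), derives the sign formula from Laplace expansion, and obtains the inverse identity from the fact that matrix transposition realizes permutation inversion.
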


\begin{proof}
Given a permutation $\bt\in \Sym{n-1}$ with permutation matrix $A_{\bt}$ where rows and columns indexed by $[n-1]$. For $j \in [n]$ consider the permutation matrix $A_{\bt'}$ obtained from $A_{\bt}$ by inserting after the row indexed by $(j-1)$
and after the column indexed by $(i-1)$ a row and a column of $0$s and then replacing the position $(j,i)$ by $1$. For $j=0$ or $i=0$ we mean an
insertion before the row indexed by $0$ or column indexed by $0$.
Applying $\symtowermap_i$ to the permutation corresponding to $A_{\bt'}$ yields $\bt$. It follows that
$\symtowermap_i$ is surjective.  Since there are $n+1$ possible positions for $j$, there are at least $n+1$ permutations in the preimage of every 
$\bt \in \Sym{n-1}$. From
$(n+1) \cdot \# \Sym{n-1} = \#
S_{[n]}$ we then deduce that 
each preimage contains exactly $n+1$ elements.

The formula for the sign of $\symtowermap_i(\bt)$ follows from Laplace expansion via
\[
\sgn(\symtowermap_i(\bt))=\mathrm{det}(A_{\symtowermap_i(\bt)})=(-1)^{i+\bt(i)}\det(A_{\bt})=(-1)^{i+\bt(i)}\sgn(\bt).
\]
The last part follows from the fact that the inverse of a permutation matrix is given by transposition.
\end{proof}

Postcomposing $\symtowermap_i(\bt)$ with the face map $\bd_{\bt(i)}$ yields

\[
\bd_{\bt(i)} \symtowermap_i(\bt)(k)=\begin{cases}
\bt(k) & \for~ k<i\\
\bt(k+1) & \for~ k\geq i
\end{cases}=\bt\bd_i(k).
\]

Now we return to a simplicial $R$-module $X$ with symmetries. For $x\in X[n]$, we obtain from 
the above formula the following in terms of the maps in $X$ (which by our convention are non-bold faced).

\begin{align} \label{eq:phi}
    d_i t x & =\symtowermap_i(t)d_{t(i)} x.
\end{align}

Fix a field $R$ with characteristic $0$ and let $X\in \sMod{sym}{R}$. In every degree $n$ and for every $0\leq k\leq n$ consider the endomorphism $\pmap{k}{n}$ of $X([n])$ given by the formula
\[
\pmap{k}{n}(x)=\sum_{t\in \Sym{k}}\frac{1}{(k+1)!}\sgn(t)\,t\,x,
\]
where we identify $\Sym{k}$ with the subgroup of $\Sym{n}$ fixing $k+1,\ldots, n$ pointwise. In particular, we have $\pmap{0}{n}(x)=x$. The following identity is
immediate from the definition of $\pmap{n}{n}$:
\begin{align} \label{eq:equiv}  
  \pmap{n}{n}(t\,x) & = \sgn(t) \pmap{n}{n}(x) \text{ for } x \in X[n] \text{ and } t \in \Sym{n}. 
\end{align}

 It will be convenient to introduce the following notation for some integer $k$ 
\[
    \argdel{\leq k}{n}\letbe \sum_{i=0}^k (-1)^{i}d_i\quad \textnormal{and}\quad \argdel{>k}{n}\letbe\sum_{i=k+1}^{n}(-1)^{i}d_i,
\]
obviously satisfying $\del_n=\argdel{\leq k}{n}+\argdel{>k}{n}$.
\begin{proposition}
    For $0<k\leq n$ it holds that
    \begin{align*}
    \del_n \pmap{k}{n} = \pmap{k-1}{n-1}\argdel{\leq k}{n}+\pmap{k}{n-1}\argdel{>k}{n}.
    \end{align*}
    In particular, the family $\{\,\pmap{n}{n}\,\}$ defines a chain map $p_X:C_\blt X\to C_\blt X$.
\end{proposition}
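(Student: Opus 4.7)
The plan is to unfold the definitions and split the computation of $\del_n \pmap{k}{n}$ according to whether the summation index $i$ in $\del_n = \sum_{i=0}^n (-1)^i d_i$ satisfies $i \leq k$ or $i > k$. The central tool is the commutation rule \eqref{eq:phi}, $d_i\, t = \symtowermap_i(t)\, d_{t(i)}$, combined with the sign formula and the surjectivity-with-fibers-of-size-$(k+1)$ statement for $\symtowermap_i \colon \Sym{k} \to \Sym{k-1}$ from \ref{dtLemma}.

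First I would treat the easy range $i > k$. Because $\Sym{k}$ is identified with the subgroup of $\Sym{n}$ pointwise fixing $k+1,\ldots,n$, for any $t \in \Sym{k}$ we have $t(i)=i$, and a direct inspection of the defining formula of $\symtowermap_i$ shows that $\symtowermap_i(t)$ equals $t$ seen as an element of $\Sym{k} \subseteq \Sym{n-1}$; in particular the sign is preserved. Hence each term $d_i\,t\,x$ rewrites as $t\,d_i\,x$, and the summation over $t \in \Sym{k}$ with the correct coefficient assembles precisely into $\pmap{k}{n-1}$ applied to $\argdel{>k}{n}\,x$.

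The core step is the range $i \leq k$. Here I would reindex the double sum $\sum_{i=0}^{k}\sum_{t\in\Sym{k}}$ through the pair $(\sigma,j)\letbe(\symtowermap_i(t),t(i))$, which by \ref{dtLemma} takes values in $\Sym{k-1}\times\{0,\ldots,k\}$, and for each fixed $i$ sweeps this product set bijectively as $t$ ranges over $\Sym{k}$. The sign calculation $\sgn(\symtowermap_i(t))=(-1)^{i+t(i)}\sgn(t)$ converts $\sgn(t)$ into $(-1)^{i+j}\sgn(\sigma)$; together with the $(-1)^i$ from $\del$ this contributes $(-1)^{j}$, and the sum over $i \in \{0,\ldots,k\}$ just contributes a factor $(k+1)$. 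After collecting the factorials, $(k+1)/(k+1)! = 1/k!$, which is exactly what is needed to recognize
\[
\frac{1}{k!}\sum_{j=0}^{k}(-1)^{j}\sum_{\sigma\in\Sym{k-1}}\sgn(\sigma)\,\sigma\, d_{j} \;=\; \pmap{k-1}{n-1}\argdel{\leq k}{n}.
\]

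The main obstacle is simply keeping the combinatorial bookkeeping clean: verifying that $(\sigma,j)$ really parametrizes the fibers of $(t\mapsto(\symtowermap_i(t),t(i)))$ uniformly in $i$, and that the identification of $\symtowermap_i(t)$ with $t$ in the $i>k$ range respects the embedding $\Sym{k}\hookrightarrow\Sym{n-1}$ used to define $\pmap{k}{n-1}$. Once both pieces are assembled, adding them gives the stated identity. For the ``in particular'' clause I would specialize to $k=n$: then $\argdel{>n}{n}=0$ and $\argdel{\leq n}{n}=\del_n$, so the identity reduces to $\del_n \pmap{n}{n} = \pmap{n-1}{n-1}\del_n$, which is exactly the chain map property.
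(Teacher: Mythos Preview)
Your proposal is correct and follows essentially the same approach as the paper: both split $\del_n$ into the ranges $i\leq k$ and $i>k$, handle the latter via the commutation of $d_i$ with $t\in\Sym{k}$, and for the former apply \eqref{eq:phi} together with the sign formula and fiber-count from \ref{dtLemma}. The only cosmetic difference is the order of reindexing---you use the bijection $t\mapsto(\symtowermap_i(t),t(i))$ for each fixed $i$ and then observe the result is independent of $i$, whereas the paper first substitutes $j=t(i)$ and then, for each fixed $j$, invokes the $(k+1)$-to-$1$ surjectivity; both routes arrive at the same factor $(k+1)/(k+1)!=1/k!$.
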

\begin{proof}
  We treat the two summands of $\del_n \pmap{k}{n}=\argdel{\leq k}{n}\pmap{k}{n}+\argdel{>k}{n}\pmap{k}{n}$ separately. For every $x\in C_nX$ it holds that
  \begin{align*}
    \argdel{\leq k}{n}\pmap{k}{n} x&=\sum_{i=0}^k(-1)^i\sum_{t\in \Sym{k}}\frac{1}{(k+1)!}\sgn(t)d_itx\\
    &\overset{\eqref{eq:phi}}{=} \sum_{i=0}^k(-1)^i\sum_{t\in \Sym{k}}\frac{1}{(k+1)!}\sgn(t)\symtowermap_i(t)d_{t(i)}x\\
    &=\sum_{t\in \Sym{k}}\frac{1}{(k+1)!}\sgn(t)\sum_{i=0}^k(-1)^i\symtowermap_i(t)d_{t(i)}x
  \end{align*}
Instead of summing over $i$, we will now sum over $j=t(i)$ and reverse the order of summation.
\begin{align*}
    \phantom{\argdel{\leq k}{n} \pmap{k}{n}}&=\sum_{t\in \Sym{k}}\frac{1}{(k+1)!}\sgn(t)\sum_{j=0}^k(-1)^{t^{-1}(j)}\symtowermap_{t^{-1}(j)}(t)d_{j}x\\
    &=\sum_{j=0}^k\sum_{t\in \Sym{k}}\frac{1}{(k+1)!}\sgn(t)(-1)^{t^{-1}(j)}\symtowermap_{t^{-1}(j)}(t)d_{j}x
\end{align*}
Applying \ref{dtLemma} for every $j$ yields
\begin{align*}
\phantom{d_{\leq k}\pmap{k}{n}}&=\sum_{j=0}^k\sum_{u\in \Sym{k-1}}\frac{k+1}{(k+1)!}(-1)^{j}\sgn(u)ud_{j}x\\
&=\pmap{k-1}{n-1}\argdel{\leq k}{n} x
\end{align*}
The claim for the second summand follows immediately from the simplicial identity \eqref{eq:IV}, that is $d_jt=td_j$ for a symmetry $t\in \Sym{k}$ 
and $j>k$. 
\end{proof}

Let $P_\blt X$ be the image of the chain map $p_X$.

\begin{proposition}\label{SymSplit}
The kernel of $p_X$ is the symmetry sub-complex $\sDeg_\blt X$ and the image $P_\blt X$ of $p_X$ is given as 
\begin{align*}
     P_nX\letbe&\Bigg\langle\, \Bigg\{ \sum_{t \in \Sym{n}} \sgn(t) tx \,\bigg\vert\, x\in X(n)\Bigg \}\, \Bigg\rangle_{R}.
\end{align*}

The exact sequence of chain complexes
  \[
   \begin{tikzcd}
    0\arrow{r}{}&\sDeg_\blt X\arrow[hook]{r}{} &C_\blt X\arrow{r}{p_X}&P_\blt X \arrow{r}&0
    \end{tikzcd}
\]
splits. In addition, in each degree $n$ there is a direct sum decomposition of $\Sym{n}$ representations
\begin{align*}
C_nX=\sDeg_nX\oplus P_nX.
\end{align*}
\end{proposition}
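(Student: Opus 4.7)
The plan is to observe that $p^n_n$ is, up to normalization, the antisymmetrizer in the group algebra $R[\Sym{n}]$, and hence an idempotent. Using characteristic $0$ so that $(n+1)!$ is invertible, a direct computation gives
\[
p^n_n \circ p^n_n \; = \; \frac{1}{((n+1)!)^2} \sum_{s,t \in \Sym{n}} \sgn(st)\, (st) \; = \; \frac{1}{(n+1)!}\sum_{u \in \Sym{n}} \sgn(u)\,u \; = \; p^n_n,
\]
where in the middle step I re-index by $u = st$ and note that each $u$ is hit exactly $(n+1)!$ times. Since the previous proposition already established that $p_X = \{p^n_n\}$ is a chain map, this idempotence in each degree immediately yields the direct sum decomposition $C_\blt X = \ker(p_X) \oplus P_\blt X$ of chain complexes, with the inclusion of $P_\blt X$ supplying a splitting of the stated short exact sequence.

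Next I would identify $\ker(p_X)$ with $\sDeg_\blt X$. For the inclusion $\sDeg_nX \subseteq \ker(p^n_n)$, applying $p^n_n$ to a generator $x - \sgn(t)tx$ and using \eqref{eq:equiv} (i.e. $p^n_n(tx) = \sgn(t) p^n_n(x)$) gives
\[
p^n_n\big(x - \sgn(t)tx\big) \; = \; p^n_n(x) - \sgn(t)^2 p^n_n(x) \; = \; 0.
\]
For the reverse inclusion, the key identity is
\[
x - p^n_n(x) \; = \; \frac{1}{(n+1)!}\sum_{t \in \Sym{n}} \big(x - \sgn(t)tx\big),
\]
which expresses $x - p^n_n(x)$ as an $R$-linear combination of generators of $\sDeg_nX$. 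Thus if $p^n_n(x) = 0$ then $x \in \sDeg_nX$. In particular, since $\ker(p_X)$ is a sub-complex (as the kernel of a chain map), this also confirms that $\sDeg_\blt X$ is genuinely closed under the differential, as promised earlier after the definition.

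The description of $P_nX$ is then simply a restatement of what $p^n_n$ outputs on a generator: $(n+1)! \cdot p^n_n(x) = \sum_{t \in \Sym{n}} \sgn(t) tx$, and invertibility of $(n+1)!$ in $R$ ensures these elements span $\mathrm{im}(p^n_n) = P_nX$. Finally, to upgrade the decomposition to one of $\Sym{n}$-representations it suffices to observe that both summands are $\Sym{n}$-stable: $\sDeg_nX$ is stable by construction (as noted in the paper), and for $P_nX$ the formula $t \cdot p^n_n(x) = \sgn(t)p^n_n(x)$ from \eqref{eq:equiv} shows that it lies inside the sign isotypic component and in particular is $\Sym{n}$-invariant as a subspace. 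There is no serious obstacle here: the argument is essentially the standard fact that an idempotent in a group algebra over a field of characteristic $0$ (namely the antisymmetrizer) splits the module into its image and kernel, and the only thing to verify carefully is compatibility with the simplicial differential, which was done in the preceding proposition.
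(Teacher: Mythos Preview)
Your proof is correct and follows essentially the same approach as the paper: both arguments hinge on the idempotence $p_X^2=p_X$, the identity $x-p^n_n(x)=\frac{1}{(n+1)!}\sum_{t}(x-\sgn(t)tx)$ to identify the kernel with $\sDeg_\blt X$, and \eqref{eq:equiv} for $\Sym{n}$-stability. The only cosmetic difference is that you spell out the idempotence computation explicitly and lead with it, whereas the paper states $p_X^2=p_X$ in passing after first handling the kernel.
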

\begin{proof}
We first show that $\sDeg_\blt X$ is contained in the
kernel of $p_X$.  By \eqref{eq:equiv} and linearity of $p_X$ for any $x\in C_nX$ and  $t\in \Sym{n}$ we have
 $$p_X\big(\,x-\sgn(t)tx\,\big)=0.$$ Therefore, $\sDeg_\blt X$ is contained in the kernel of $p_X$. 

Conversely, assume $p_X(x)=0$
for some $x \in X([n])$, then 
\begin{align*}
    x&=x-p_X(x)\\
    &=x-\frac{1}{(n+1)!}\sum_{t \in \Sym{n}} \sgn(t)t x\\
    &=\frac{1}{(n+1)!}\sum_{t \in \Sym{n}} \big(\,x- \sgn(t)t x\,\big),
\end{align*}
which is a linear combination of generators of 
$\sDeg_\blt X$ and hence an element of
$\sDeg_\blt X$. 

It follows that $\sDeg_\blt X$ is the kernel of $p_X$.

The claim about the image $P_nX$ of $p_X$ is immediate
from the definition of $p_X$ and the fact that $R$ is
a field of characteristic $0$. 

Since $p_X^2=p_X$, the inclusion of $P_\blt X$ into $C_\blt X$ splits the sequence.

 By \eqref{eq:equiv} the kernel and the image of $p_X$ 
    are invariant under symmetries.
    Thus, for a fixed $n$ all three chain groups are $\Sym{n}$-modules.
\end{proof}

At this point, we can define the crucial chain homotopy.

\begin{proposition}\label{PropSimpChainHtpy}
    For $n \geq -1$ set
    $$h_n \colon \left\{ \begin{array}{ccc} C_nX & \rightarrow & 
    C_{n+1} X \\ x & \mapsto &h_n(x)=\displaystyle{\sum_{k=0}^n} (-1)^k\pmap{k}{n+1} s_k x\end{array} \right. . $$
    Then we have
    $$\partial_{n+1} h_{n} + h_{n-1}\partial_n = \id_{C_nX} - \pmap{n}{n}.$$
    Thus, $h_\blt$ is a chain homotopy between
    $\id_{C_\blt X}$ and $p_X$
\end{proposition}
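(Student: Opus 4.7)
The plan is to expand $\partial_{n+1} h_n$ and $h_{n-1}\partial_n$ separately, using the formula
$\partial_n \circ p_k^n = p_{k-1}^{n-1}\partial_n^{\leq k} + p_k^{n-1}\partial_n^{>k}$
established in the previous proposition together with the standard simplicial identities \eqref{eq:III} for compositions $d_i s_k$, and then collect terms hunting for a telescoping sum.

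First, I would compute the two building-block identities
\[
\partial_{n+1}^{\leq k}s_k x = s_{k-1}\partial_n^{\leq k-1} x + (-1)^k x,\qquad \partial_{n+1}^{>k}s_k x = (-1)^{k+1} x - s_k \partial_n^{>k} x,
\]
obtained by splitting each sum into the cases $i<k$, $i\in\{k,k+1\}$, and $i>k+1$ and invoking $d_k s_k = d_{k+1}s_k = \id$. Substituting into the formula from the preceding proposition yields, for $k\ge 1$,
\[
\partial_{n+1}p_k^{n+1}s_k x = p_{k-1}^n s_{k-1}\partial_n^{\leq k-1}x + (-1)^k p_{k-1}^n x - (-1)^k p_k^n x - p_k^n s_k\partial_n^{>k}x,
\]
while the boundary case $k=0$ (where $p_0^{n+1}=\id$) reduces directly to $\partial_{n+1}s_0 x = -s_0\partial_n^{>0}x$, which fits the same pattern under the convention $p_{-1}=0$.

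Next, I multiply by $(-1)^k$ and sum over $k=0,\dots,n$. The two middle terms produce $\sum_{k=1}^n(p_{k-1}^n - p_k^n)x$, which telescopes to $x - p_n^n x$. The remaining two terms, after reindexing $j = k-1$ in the first, become
\[
-\sum_{j=0}^{n-1}(-1)^j p_j^n s_j \partial_n^{\leq j} x - \sum_{k=0}^n (-1)^k p_k^n s_k \partial_n^{>k} x.
\]
On the other side, $h_{n-1}\partial_n x = \sum_{k=0}^{n-1}(-1)^k p_k^n s_k(\partial_n^{\leq k} + \partial_n^{>k})x$ contributes exactly the negatives of these, except that in the $\partial_n^{>k}$ sum the value $k=n$ is absent; but $\partial_n^{>n}=0$, so nothing is lost, and the combined expression collapses to $x - p_n^n x$, as desired.

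The main obstacle is purely bookkeeping: keeping the sign conventions, the range of indices, and the boundary cases $k=0$ and $k=n$ straight while matching up $\partial_n^{\leq k-1}$ (produced on the $\partial_{n+1}h_n$ side after reindexing) with $\partial_n^{\leq k}$ (produced on the $h_{n-1}\partial_n$ side). Once the telescoping and the vanishing $\partial_n^{>n}=0$ are pinpointed, the identity drops out with no further input, and the final sentence of the proposition—$h_\bullet$ is a chain homotopy between $\id$ and $p_X$—is just the reformulation of what was computed.
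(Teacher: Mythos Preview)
Your proof is correct and follows essentially the same route as the paper: apply the preceding proposition to each $\partial_{n+1}p_k^{n+1}s_k x$, expand $\partial_{n+1}^{\leq k}s_k$ and $\partial_{n+1}^{>k}s_k$ via the simplicial identities, and recognize the telescoping $\sum_{k=1}^n(p_{k-1}^n-p_k^n)x = x - p_n^n x$ with the remaining terms assembling into $-h_{n-1}\partial_n x$. One cosmetic remark: the phrase ``fits the same pattern under the convention $p_{-1}=0$'' is slightly off if read literally (plugging $k=0$ and $p_{-1}=0$ into your displayed formula for $k\ge 1$ would produce a spurious $-x$), but since your actual bookkeeping correctly treats the $k=0$ contribution as feeding only the $\partial_n^{>k}$ sum, this does not affect the argument.
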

\begin{proof}
By straightforward calculation we have
\begin{align*}
    \del_{n+1} h_n(x)&=\sum_{k=0}^n(-1)^k \del_{n+1} \pmap{k}{n+1} s_k x.\\
\end{align*}
We start by considering the summand on the right hand side corresponding to $k=0$. Since $\pmap{0}{n}=\id_{C_n X}$ we have
\begin{align*} \del_{n+1} \pmap{0}{n+1} s_0 x &=\del_{n+1} s_0 x \\
    &=-s_0\argdel{>0}{n} x.
\end{align*}

Now consider the cases $0<k\leq n$
\[
(-1)^k \del_{n+1} \pmap{k}{n+1} s_k x=(-1)^k \pmap{k-1}{n}\argdel{\leq k}{n+1}s_kx+\pmap{k}{n}\argdel{>k}{n+1}s_kx
\]
since $k>1$, we use $\argdel{\leq k}{n+1}=\argdel{\leq k-1}{n+1}+(-1)^kd_k$ to obtain in the first summand
\begin{align*}
    \pmap{k-1}{n}\argdel{\leq k}{n+1}s_kx&=\pmap{k-1}{n}\argdel{\leq k-1}{n+1}s_kx +(-1)^k\pmap{k-1}{n}d_{k}s_kx\\
    &=\pmap{k-1}{n}s_{k-1}\argdel{\leq k-1}{n+1}x +(-1)^k\pmap{k-1}{n}x.
\end{align*}
Similarly, in the second summand, it holds that
\begin{align*}
    \pmap{k}{n}\argdel{>k}{n+1}s_kx &=(-1)^{k+1}\pmap{k}{n}d_{k+1}s_kx+ \pmap{k}{n}\argdel{>k+1}{n+1}s_kx\\
    &=(-1)^{k+1}\pmap{k}{n}x-\pmap{k}{n}s_k\argdel{>k}{n}x.
\end{align*} 
We rearrange the summands
\begin{align*}
    \del_{n+1} h_n(x)= &-s_0\argdel{>0}{n}x+\sum_{k=1}^n(-1)^k\pmap{k-1}{n}s_{k-1}\argdel{\leq k-1}{n}x  \\ &+ \pmap{k-1}{n}x-\pmap{k}{n}x+ (-1)^{k+1}\pmap{k}{n}s_k\argdel{>k}{n}x\\
    =&-\sum_{k=0}^{n-1}(-1)^k \pmap{k}{n}s_k\del_n x+\sum_{k=1}^{n}\pmap{k-1}{n}x-\pmap{k}{n} x\\
    =&-h_{n-1}(\del_n x)+x-\pmap{n}{n}(x).
\end{align*}
\end{proof}

Now we are in position to provide the 
proof of our main result on 
symmetric simplicial $R$-modules.

\begin{proof}[Proof of \ref{mainSimp}]
    By \ref{PropSimpChainHtpy} we have that $p_X$ is 
    chain homotopic to the identity. In addition, the inclusion of $P_\blt X \hookrightarrow C_\blt X$ 
    splits the sequence
    \[
    \begin{tikzcd}
    0\arrow{r}{}&\sDeg_\blt X\arrow[hook]{r}{} &C_\blt X\arrow{r}{p_X}&P_\blt X \arrow{r}&0.
    \end{tikzcd}
    \]
    It follows that $P_\blt X$ is a deformation 
    retraction of $C_\blt X$. From the long exact 
    sequence in homology we then deduce
    that $\sDeg_\blt X$ is acyclic.
    In particular, there are isomorphims:

    \[
    H_\blt (X) = H_\blt(C_\blt X) \simeq H_\blt (C_\blt X /\sDeg_\blt X ) \simeq H_\blt(P_\blt X).
    \]

    If $[\alpha] \in H_\blt (C_\blt X /\sDeg_\blt X )$ is a homology class then
    for a symmetry $t$ we have $\alpha-\sgn(t) t\alpha \in \sDeg_\blt$. Thus
    $[\alpha-\sgn(t) t \alpha] = 0$. It follows that $[\alpha] = \sgn(t)[t\alpha]$.
    
\end{proof}

\subsection{Cubical \texorpdfstring{$R$}--Modules with order preserving Symmetries} \label{sub:cubical}

Let $X \in \cMod{\trp}{R}$ be a symmetric cubical $R$-module where $R$ is a field of characteristic $0$. Similarly to the 
situation in the simplicial case we can identify the symmetries in degree $n$ with the symmetric group $S_n$.
Note that in the simplicial case the identification 
in degree $n$ was with $\Sym{n} \cong S_{n+1}$.
As in the simplicial case we consider for a given integer $n \geq 0$ the sub-module
\begin{align*}
P_n^{\trp}X\letbe&\Bigg\langle\, \Bigg\{ \,\sum_{t \in S_n} \sgn(t) t x\, \bigg\vert\, x\in X(n)\,\Bigg\} \,\Bigg\rangle_{R}.
\end{align*}

of $C_nX$. Let $p^{\trp}_X\colon C_\blt X\to P_{\blt}^{\trp}X$ be the map which is in degree $n$ given by
\begin{align*}
    p^{\trp}_X(x)=\sum_{t\in S_n}\frac{1}{n!} \sgn(t)t x.
\end{align*}
Recall that by \ref{ComplexAndSCorollary}, the functor $S\colon\cMod{\trp}{R} \to \sMod{\sym}{R}$ gives rise to the identification
\[
C_\blt SX= C_{\blt+1} X.
\]
Furthermore, the symmetry complex of $SX$ can be identified with the $\trp$-symmetry complex $\tCon_n X$ as follows.
\begin{lemma}\label{CubSimpIdent}
It holds that
\begin{align*}
    \tCon_{\blt+1} X=\sDeg_{\blt} SX,\quad P^{\trp}_{\blt+1} X=P_{\blt} SX\quad \text{ and }\quad p_{SX}=p^{\trp}_X.
\end{align*}
In particular $p_X^t$ is a chain map and both $P^{\trp}_{\blt}X$ and $\tCon_{\blt} X$ are chain sub-complexes of $C_\blt X$. Also, there is a split exact sequence of chain complexes  
\[
\begin{tikzcd}
    0\arrow{r}{}&\tCon_\blt X\arrow[hook]{r}{} &C_\blt X\arrow{r}{p^{\trp}_X}&P^{\trp}_\blt X \arrow{r}&0
\end{tikzcd}
    \] and a decomposition of $S_n$ representations
    \begin{align*}
C_n=\tCon_{n}\oplus P^{\trp}_n
\end{align*}
in each degree $n\in \mathbb{N}_0$.
\end{lemma}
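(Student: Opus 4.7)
My plan is to reduce every claim in the lemma to the corresponding simplicial statement via the functor $S$ and the identification of chain complexes $C_\blt X = C_{\blt-1} SX$ from \ref{ComplexAndSCorollary}. The crux is to verify that under this identification the symmetry group actions also coincide, together with their signs. On the cubical side, $S_{n+1}$ acts on $C_{n+1} X$ by permuting the $n+1$ coordinates of $[1]^{n+1}$ via the transpositions $t_1, \ldots, t_n$; on the simplicial side, $\Sym{n}$ acts on $C_n SX$ via the generators $t_0^\Delta, \ldots, t_{n-1}^\Delta$. By the very definition of $S$ in \ref{subsec:FunConstruction}, namely $t_i^\Delta = t_{i+1}$, the two actions coincide after identifying $\Sym{n}$ with $S_{n+1}$ via the obvious index shift; and since both sign functions count the parity of the number of simple-transposition generators, the signs agree as well.

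With this signed-group identification in hand, the three equalities fall out by inspection. The submodule $\tCon_{n+1} X$ is spanned by elements $x - \sgn(t) t x$ for $t \in S_{n+1}$, while $\sDeg_n SX$ is spanned by $x - \sgn(t) t x$ for $t \in \Sym{n}$; since these signed generating sets are literally the same via the identification above, we read off $\tCon_{\blt+1} X = \sDeg_\blt SX$. The same reasoning applied to the generators $\sum_t \sgn(t) t x$ yields $P^{\trp}_{\blt+1} X = P_\blt SX$, and a direct comparison of the defining averaging sums for $p^{\trp}_X$ and $p_{SX}$ gives $p^{\trp}_X = p_{SX}$.

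Everything else is transport of structure from \ref{SymSplit} applied to $SX \in \sMod{\sym}{R}$. That proposition tells us $p_{SX}$ is a chain map with kernel $\sDeg_\blt SX$ and image $P_\blt SX$, fitting into a split short exact sequence that in each degree realizes $C_n SX$ as an internal direct sum of $\Sym{n}$-representations. Rereading these statements through the degree-shifted identification $C_{\blt} X = C_{\blt-1} SX$ immediately yields that $p^{\trp}_X$ is a chain map, that $\tCon_\blt X$ and $P^{\trp}_\blt X$ are subcomplexes of $C_\blt X$, and the desired split short exact sequence together with the $S_n$-representation decomposition in each degree. I do not anticipate any real obstacle: the only bookkeeping is the degree shift by one and the observation that what is called $\Sym{n}$ on the simplicial side is literally the group acting as $S_{n+1}$ on the cubical side, via the generator-by-generator match $t_i^\Delta = t_{i+1}$.
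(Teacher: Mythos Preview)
Your proposal is correct and follows essentially the same route as the paper: both arguments unwind the definition of $S$ (in particular $t_i^\Delta = t_{i+1}$) to match the signed symmetry actions, read off the three equalities from the generating sets, and then carry over the simplicial results (\ref{SymSplit} and the chain-map property of $p_{SX}$) through the degree-shifted identification $C_\blt X = C_{\blt-1} SX$. The paper's proof is terser, but the content is the same.
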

\begin{proof}
The claim can be deduced directly from the definition of $S$. For the symmetry complex we see that
    \begin{align*}
        \sDeg_nSX&=\Bigg\langle\, \Bigg\{\, x -\sgn(t)tx \,\bigg\vert \,x\in SX(n),t\in \Sym{n}\,\Bigg\} \,\Bigg\rangle_R\\
        &=\Bigg\langle\, \Bigg\{\, x -\sgn(t) t x \,\bigg\vert\, x\in X(\one^{n+1}),t\in S_{n+1}\,\Bigg\}\, \Bigg\rangle_R\\
        &=\tCon_{n+1}X.
    \end{align*}
    In the same way it follows by substitution of $S$ that $P^{\trp}X=PSX $ and $p_{SX}=p^{\trp}_X$.
\end{proof}

Using \ref{CubSimpIdent} we can now give the proof
of \ref{mainCub}.

\begin{proof}[Proof of \ref{mainCub} for $A=\{\trp\}$]
        Let $X\in \cMod{\trp}{R}$ and
        $S$ the operator defined before \ref{lem:welldefined}. Using the identifications obtained in \ref{CubSimpIdent}, the claim follows directly by applying the main result on simplical $R$-modules \ref{mainSimp} to $SX$. 
\end{proof}

\subsection{Cubical \texorpdfstring{$R$}--Modules with Reversals} \label{sub:reversal}
Since reversals lack a simplicial counterpart, they require a different treatment.
Fix a cubical $R$-module $X\in \mathbf{cMod}(R)^{r}$ and let $\kk$ be a field of $\textnormal{char}(\kk)\neq 2$. The reversals in degree $n$ can be identified with the group $\ZZ_2^n$ as follows. We identify $a=(a_1,\ldots, a_n)\in\ZZ_2^n$ with the reversal
\[
r_1^{a_1}\cdots r_n^{a_n}.
\]
Clearly, the sign of a given $a\in\ZZ^n$ is $1$ if $a_1+\dots +a_n$ is even, and $-1$ otherwise. For a given $1\leq i\leq n$, let $\revtowermap_i\colon\ZZ_2^n\to \ZZ_2^{n-1}$ be the projection 
deleting the $i$-th coordinate. Given a $x\in X([1]^n)$ using \eqref{Ceq:XIII} we obtain 
\begin{align*} 
d_i^{\ve} a x & = \begin{cases}
    \revtowermap_i(a)d_i^{\ve}  x, & \textnormal{if } a_i=0\\
    \revtowermap_i(a)d_i^{1-\ve}  x & \textnormal{if } a_i=1.
\end{cases}
\end{align*}
We construct a family of endomorphisms $\prev{k}{n}$ of $X([1]^n)$ for $1\leq k\leq n$. 
\begin{align*}
    \prev{k}{n}(x)=\sum_{a\in \ZZ_2^k} \frac{\sgn(a)}{2^k}ax 
\end{align*}
with the convention $\prev{k}{n}x=x$. Note, that for $\prev{k}{n}$ to be well defined we need $\textnormal{char}(k)\neq 2$. It follows immediately that
\begin{align} \label{eq:equivrev}  
  \prev{n}{n}(\,a\,x\,) & = \sgn(a) \prev{n}{n}(x) \text{ for } x \in X(\one^n) \text{ and } a \in \mathbb{Z}_2^n. 
\end{align}
Analogously to the simplicial case we set 
$$\argdelkub{\leq k}{n}=\sum_{i=1}^k (-1)^{i-1}(d_i^0-d_i^1) \text{ and }  \argdelkub{>k}{n}=\sum_{i=k+1}^n (-1)^{i-1}(d_i^0-d_i^1).$$

\begin{proposition} \label{prop:useful}
    For $1\leq k\leq n$ and $x\in X([1]^n)$ it holds that
    \[
    \argdelkub{}{n} \prev{k}{n}x= \prev{k-1}{n-1}\argdelkub{\leq k}{n}x + \prev{k}{n-1}\argdelkub{>k}{n}x.
    \]
    In particular, the family $\pirev_X = \{\prev{n}{n}\}$ gives rise to a chain map.
\end{proposition}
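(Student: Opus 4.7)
The plan is to mirror the strategy used in the simplicial case for $\pmap{k}{n+1}$: split the differential as $\partial_n = \argdelkub{\leq k}{n} + \argdelkub{>k}{n}$ and prove the stronger intertwining identities
\[
\argdelkub{\leq k}{n}\prev{k}{n} = \prev{k-1}{n-1}\argdelkub{\leq k}{n} \qquad\text{and}\qquad \argdelkub{>k}{n}\prev{k}{n} = \prev{k}{n-1}\argdelkub{>k}{n},
\]
whose sum is the asserted formula. Throughout, $\prev{k}{n}$ is well defined on $C_n X$ because reversals preserve the cubical degeneracy submodule by \eqref{Ceq:XIV}, and the face maps $d_i^0 - d_i^1$ descend to the quotient by \eqref{Ceq:II}.

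For the second identity, fix $i > k$. Since the reversal $a = r_1^{a_1}\cdots r_k^{a_k}$ and the face map $d_i^{\ve}$ act on disjoint coordinates, \eqref{Ceq:XIII} gives $d_i^{\ve}\,a = a\,d_i^{\ve}$, where the right-hand $a$ now acts on $X(\one^{n-1})$. Passing $d_i^{\ve}$ through the defining sum of $\prev{k}{n}$ immediately yields $(d_i^0 - d_i^1)\prev{k}{n} = \prev{k}{n-1}(d_i^0 - d_i^1)$, and combining with alternating signs produces the required identity.

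For the first identity, fix $1 \leq i \leq k$ and expand
\[
(d_i^0 - d_i^1)\prev{k}{n}x = \sum_{a \in \ZZ_2^k}\frac{\sgn(a)}{2^k}(d_i^0 - d_i^1)\,ax.
\]
Applying \eqref{Ceq:XIII} and splitting the sum according to the value of $a_i$ is the key computational step. When $a_i = 0$ the face map passes through unchanged and $\sgn(a) = \sgn(\pirev_i(a))$; when $a_i = 1$ the map $d_i^{\ve}$ becomes $d_i^{1-\ve}$ and $\sgn(a) = -\sgn(\pirev_i(a))$. The two sign flips conspire, so every term contributes $\sgn(\pirev_i(a))\,\pirev_i(a)(d_i^0 - d_i^1)x$. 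Each $b \in \ZZ_2^{k-1}$ arises exactly twice as $\pirev_i(a)$, converting $2^{-k}$ into $2^{-(k-1)}$ and yielding $\prev{k-1}{n-1}(d_i^0 - d_i^1)x$. Summing over $i$ with alternating signs gives the first identity. Specialising to $k = n$ gives $\argdelkub{>n}{n} = 0$, hence $\partial_n \prev{n}{n} = \prev{n-1}{n-1}\partial_n$, so $\pirev_X$ is a chain map. The argument is essentially bookkeeping; the only subtle point is the twofold sign cancellation for $a_i = 1$, and no essential obstacle is expected. The hypothesis $\textnormal{char}(R) \neq 2$ is invisible in the computation itself and enters only through the existence of the prefactor $1/2^k$ in $R$.
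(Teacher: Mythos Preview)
Your proof is correct and follows essentially the same approach as the paper: both split $\partial_n$ into the pieces $\argdelkub{\leq k}{n}$ and $\argdelkub{>k}{n}$, handle $i>k$ directly via \eqref{Ceq:XIII}, and for $i\leq k$ split the sum over $a$ according to the value of $a_i$ to exploit the double sign cancellation. One small notational slip: what you write as $\pirev_i(a)$ should be the coordinate-deletion map $\revtowermap_i(a)$ in the paper's notation, since $\pirev$ already names the chain map $\{\prev{n}{n}\}$.
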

\begin{proof}
By linearity $d_i^\ve$ it holds that    
    \begin{align*}
    \argdelkub{}{n} \prev{k}{n}x
    &=\sum_{i=1}^{n}(-1)^{i-1}\sum_{a\in\ZZ_2^k}\frac{\sgn(a)}{2^k} (d_i^0-d_i^1)ax.
    \end{align*}
    If $i>k$, using \eqref{Ceq:XIII} we immediately obtain
    \begin{align*}
        \sum_{a\in\ZZ_2^k}\frac{\sgn(a)}{2^k} (d_i^0-d_i^1)ax&=\sum_{a\in\ZZ_2^k}\frac{\sgn(a)}{2^k} a(d_i^0-d_i^1)x\\
        &=\prev{k}{n-1}(d_i^0-d_i^1)x
    \end{align*}
    Now, consider the case $i\leq k$. Splitting the sum and applying \eqref{Ceq:XIII} yields
    \begin{align*}
        \sum_{a\in\ZZ_2^k}\sgn(a) (d_i^0-d_i^1)ax&=\sum_{\substack{a\in\ZZ_2^k\\a_i=0}}\sgn(a) (d_i^0-d_i^1)ax+\sum_{\substack{a\in\ZZ_2^k\\a_i=1}}\sgn(a) (d_i^0-d_i^1)ax\\
        &=\sum_{\substack{a\in\ZZ_2^k\\a_i=0}}\frac{\sgn(a)}{2^k} \revtowermap_i(a)(d_i^0-d_i^1)x+\sum_{\substack{a\in\ZZ_2^k\\a_i=1}}\frac{\sgn(a)}{2^k} \revtowermap_i(a)(d_i^1-d_i^0)x\\
        &=\sum_{a\in\ZZ_2^{k-1}}\frac{\sgn(a)}{2^k} a(d_i^0-d_i^1)x+\sum_{a\in\ZZ_2^{k-1}}\frac{\sgn(a)}{2^k} a(d_i^0-d_i^1)x.
    \end{align*}
    The sign of the second summand can be deduced  from  $\sgn(a)=-\sgn(\revtowermap_i(a))$ if $a_i=1$. Overall, in this case we get 
    \[
    \sum_{a\in\ZZ_2^k}\sgn(a) (d_i^0-d_i^1)ax=\prev{k-1}{n-1}(d_i^0-d_i^1)x
    \]
    as claimed.
    \end{proof}

 Using \ref{prop:useful}
 we can set up the chain homotopy.
  \begin{proposition}\label{PropRevChainHtpy}
    For $n \geq 0$ set
    $$\hrev{n} \colon \left\{ \begin{array}{ccc} C_nX & \rightarrow & 
    C_{n+1} X \\ x & \mapsto &\hrev{n}(x)=\displaystyle{\sum_{k=1}^n(-1)^k \prev{k}{n+1}\Connection_k^0x}\end{array} \right. . $$
    Then we have
    $$\del_{n+1} \hrev{n} + \hrev{n-1}\del_n = \id_{C_nX} - \prev{n}{n}.$$
    Thus, $\hrev{\blt}$ is a chain homotopy between
    $\id_{C_\blt X}$ and $\pirev_X$.
\end{proposition}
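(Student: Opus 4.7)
The plan is to mirror the proof of the simplicial chain homotopy \ref{PropSimpChainHtpy}, with the positive connection $\Connection_k^0$ playing the role of the simplicial degeneracy $s_k$ and the cubical identities from the Appendix replacing the simplicial ones. First I would apply \ref{prop:useful} to each summand of
\[
\del_{n+1}\hrev{n}(x) = \sum_{k=1}^n (-1)^k \del_{n+1}\prev{k}{n+1}\Connection_k^0 x,
\]
to obtain the decomposition
\[
\del_{n+1}\prev{k}{n+1}\Connection_k^0 x = \prev{k-1}{n}\argdelkub{\leq k}{n+1}\Connection_k^0 x + \prev{k}{n}\argdelkub{>k}{n+1}\Connection_k^0 x.
\]

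Next I would simplify $\argdelkub{\leq k}{n+1}\Connection_k^0 x$ and $\argdelkub{>k}{n+1}\Connection_k^0 x$ using the cubical identities. The two face maps $d_k^\ve$ and $d_{k+1}^\ve$ which act on the coordinates merged by $\Connection_k^0$ combine, after taking the difference $d_i^0 - d_i^1$ and passing to the quotient $C_\blt X$ in which $s_i d_i^1$ vanishes, to yield the identity; the remaining face maps commute past $\Connection_k^0$ with appropriate index shifts, namely $(d_i^0-d_i^1)\Connection_k^0 = \Connection_{k-1}^0(d_i^0-d_i^1)$ for $i<k$ and $(d_i^0-d_i^1)\Connection_k^0 = \Connection_k^0(d_{i-1}^0-d_{i-1}^1)$ for $i>k+1$. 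Each of $\argdelkub{\leq k}{n+1}\Connection_k^0 x$ and $\argdelkub{>k}{n+1}\Connection_k^0 x$ therefore decomposes as a signed copy of $x$ plus a term of the form $\Connection^0_{?}\argdelkub{?}{n} x$, exactly paralleling the simplicial situation.

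Substituting these expressions back into $\del_{n+1}\hrev{n}(x)$ and summing over $k$, the pure $x$ contributions form a telescoping sum in $k$ that collapses to $\pm(x - \prev{n}{n}x)$, while the remaining terms involving $\Connection_{k-1}^0\argdelkub{\leq k-1}{n}x$ and $\Connection_k^0\argdelkub{>k}{n}x$ recombine, after a reindexing $k\mapsto k+1$ in one of the sums, into $\mp\hrev{n-1}(\del_n x)$. Rearranging then gives the desired chain homotopy relation $\del_{n+1}\hrev{n} + \hrev{n-1}\del_n = \id_{C_nX} - \prev{n}{n}$. The main obstacle is careful bookkeeping of signs and indices, especially at the boundary values $k=1$ and $k=n$, where one must check that the absent $\Connection_0^0$ and the empty sums $\argdelkub{\leq 0}{n}$ and $\argdelkub{>n}{n}$ cause no issues and that the reindexing reproduces $\hrev{n-1}\del_n$ exactly with the right sign. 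Once the cubical identities have been assembled, the remaining computation is mechanical.
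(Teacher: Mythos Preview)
Your proposal is correct and follows essentially the same route as the paper: apply \ref{prop:useful} to split each $\del_{n+1}\prev{k}{n+1}\Connection_k^0x$, use the cubical identity \eqref{Ceq:V} to commute the remaining face maps past $\Connection_k^0$ (with the boundary terms $d_k^0\Connection_k^0=\id$ and $d_{k+1}^0\Connection_k^0=\id$ producing the telescope), reindex, and collect. The paper's proof carries out exactly this computation, and your remarks about the boundary cases $k=1,n$ and the vanishing of $\argdelkub{\leq 0}{n+1}$, $\argdelkub{>n+1}{n+1}$ match the places where the paper silently drops terms.
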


\begin{proof}
    \begin{align*}
        \argdelkub{}{n+1} \hrev{n}(x) &= \sum_{k=1}^n(-1)^k \argdelkub{}{n+1} \prev{k}{n+1}\Connection_k^0 x \\
        &=\sum_{k=1}^n(-1)^k(\prev{k-1}{n}\argdelkub{\leq k}{n+1}\Connection_k^0x + \prev{k}{n}\argdelkub{>k}{n+1}\Connection_k^0x)
        \end{align*}
    By $d_k^0\Connection_k^0x=x=d_{k+1}^0\Connection_{k+1}^0x$ this is equal to 
        
    \begin{align*}
        &=\sum_{k=1}^n(-1)^k(\prev{k-1}{n}\argdelkub{\leq k-1}{n}\Connection_k^0x+(-1)^{k-1}\prev{k-1}{n}x +(-1)^{k}\prev{k}{n}
        x + \prev{k}{n-1}\argdelkub{>k+1}{n+1}\Connection_k^0x)
    \end{align*}
We will split the sum and perform an index shift, here we use that $\argdelkub{\leq 0}{n+1}=0$ and $\argdelkub{>n+1}{n+1}=0$, so we can drop these summands.
\begin{align*}
        &=\sum_{k=1}^{n-1}(-1)^{k+1}\prev{k}{n}\argdelkub{\leq k}{n+1}\Connection_{k+1}^0x+(-1)^{k}\prev{k}{n}\argdelkub{>k+1}{n+1}\Connection_k^0x +\sum_{k=1}^n \prev{k}{n}x-\prev{k-1}{n}x 
\end{align*}
Applying \eqref{Ceq:V} to the first sum
\begin{align*}
    \sum_{k=1}^{n-1}(-1)^{k+1}\prev{k}{n}\argdelkub{\leq k}{n+1}\Connection_{k+1}^0x+(-1)^{k}\prev{k}{n}\argdelkub{>k+1}{n+1}\Connection_k^0x
    &=\sum_{k=1}^{n-1}(-1)^{k+1}\prev{k}{n}\Connection_{k}^0\argdelkub{\leq k}{n}x+(-1)^{k+1}\prev{k}{n}\Connection_k^0\argdelkub{>k}{n}x\\
    &=-\sum_{k=1}^{n-1}(-1)^{k}\prev{k}{n}\Connection_{k}^0\del x\\
    &=-\hrev{n-1}(\argdelkub{}{n} x)
\end{align*}
The second sum is a telescope sum:

\begin{align*}
\sum_{k=1}^n \prev{k}{n}x-\prev{k-1}{n}x&=\prev{n}{n}x-x\\
\end{align*}
Overall we obtain $\hrev{n}\argdelkub{}{n+1} +\argdelkub{}{n} \hrev{n-1}=\prev{n}{n}-\id_{C_nX}$ as claimed.
\end{proof}

The proof of the following proposition is almost identical to the proof of \ref{SymSplit}.

\begin{proposition}\label{RevSplit}
The kernel of $\pirev_X$ is the $r$-symmetry sub-complex $\rCon_\blt X$ and the image $P^r_\blt X$ of $p_X$ is given as 
\begin{align*}
      P_n^{r} X\letbe&\Bigg\langle \Bigg\{ \sum_{a\in \ZZ_2^k} \sgn(a)a x \bigg\vert x\in X([1]^n)\Bigg \} \Bigg\rangle_{\kk}.
\end{align*}
The exact sequence of chain complexes
  \[
   \begin{tikzcd}
    0\arrow{r}{}&\rCon_\blt X\arrow[hook]{r}{} &C_\blt X\arrow{r}{p_X}&P^{\rev}_\blt X \arrow{r}&0
    \end{tikzcd}
\]
splits. In addition, in each degree $n$ there is a direct sum decomposition of $\mathbb{Z}_2^{n}$ representations
\begin{align*}
C_n X=\rCon_n X\oplus P^{\rev}_n X.
\end{align*}
\end{proposition}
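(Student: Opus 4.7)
The plan is to mirror the proof of \ref{SymSplit} almost verbatim, replacing the symmetric group $\Sym{n}$ with the reversal group $\ZZ_2^n$, the averaging operator $\pmap{n}{n}$ with $\prev{n}{n}$, and the equivariance \eqref{eq:equiv} with its reversal counterpart \eqref{eq:equivrev}. Throughout we will use that $\textnormal{char}(\kk)\neq 2$ so that $\frac{1}{2^n}$ is a well-defined scalar.

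First I would verify the two inclusions that identify the kernel of $\pirev_X$ with $\rCon_\blt X$. For any chain $x\in X([1]^n)$ and any $a\in \ZZ_2^n$, linearity together with \eqref{eq:equivrev} yields
\[
\prev{n}{n}\!\big(x-\sgn(a)\,a\,x\big)\;=\;\prev{n}{n}(x)-\sgn(a)^2\,\prev{n}{n}(x)\;=\;0,
\]
so $\rCon_\blt X\subseteq\ker \pirev_X$. Conversely, if $\prev{n}{n}(x)=0$ then
\[
x\;=\;x-\prev{n}{n}(x)\;=\;\frac{1}{2^n}\sum_{a\in\ZZ_2^n}\bigl(x-\sgn(a)\,a\,x\bigr),
\]
which is an $\kk$-linear combination of generators of $\rCon_n X$. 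The identification $\mathrm{im}(\pirev_X)=P^{\rev}_\blt X$ is immediate from the definition of $\prev{n}{n}$ and the fact that $2$ is a unit in $\kk$.

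Next, I would observe that $\pirev_X$ is idempotent: again by \eqref{eq:equivrev},
\[
\prev{n}{n}\bigl(\prev{n}{n}(x)\bigr)\;=\;\frac{1}{2^n}\sum_{a\in\ZZ_2^n}\sgn(a)\,a\,\prev{n}{n}(x)\;=\;\frac{1}{2^n}\sum_{a\in\ZZ_2^n}\sgn(a)^2\,\prev{n}{n}(x)\;=\;\prev{n}{n}(x).
\]
Combined with \ref{PropRevChainHtpy} (which already gives that $\pirev_X$ is a chain map), idempotency implies $C_nX=\ker(\prev{n}{n})\oplus\mathrm{im}(\prev{n}{n})=\rCon_n X\oplus P^{\rev}_n X$ in every degree. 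This direct-sum decomposition gives both the splitting of the short exact sequence (the inclusion $P^{\rev}_\blt X\hookrightarrow C_\blt X$ provides the section) and shows, in particular, that $\rCon_\blt X$ is a sub-complex.

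Finally, for the $\ZZ_2^n$-equivariance claim I would note that $\rCon_n X$ is stable under the reversal action since $a\,(x-\sgn(b)\,b\,x)=ax-\sgn(b)(ab)(a^{-1}a)x$ is again a generator (using that $\ZZ_2^n$ is abelian and $\sgn(b)=\sgn(aba^{-1})$), and $P^{\rev}_n X$ is stable by \eqref{eq:equivrev}. I do not expect any serious obstacle here; the only subtlety compared to \ref{SymSplit} is that the appropriate ``symmetrisation'' now averages over the abelian $2$-group $\ZZ_2^n$ instead of $\Sym{n}$, which is why the characteristic hypothesis on $\kk$ can be relaxed from $\textnormal{char}(\kk)=0$ to merely $\textnormal{char}(\kk)\neq 2$.
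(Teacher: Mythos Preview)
Your proposal is correct and follows essentially the same approach as the paper's own proof: both identify the kernel via \eqref{eq:equivrev}, write an element of the kernel as an average of generators of $\rCon_n X$, and use idempotency of $\pirev_X$ to obtain the splitting and the $\ZZ_2^n$-module decomposition. The only quibble is that the chain-map property of $\pirev_X$ is established in \ref{prop:useful} rather than in \ref{PropRevChainHtpy}.
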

\begin{proof}
That $\rCon_\blt X$ is contained in the
kernel of $\pirev_X$ follows immediately from \eqref{eq:equivrev} and the linearity of $\pirev_X$.
Conversely, assume $\prev{n}{n}(x)=0$
for some $x \in X(\one^n)$, then 
\begin{align*}
    x&=x-\prev{n}{n}(x)\\
    &=x-\sum_{a\in \ZZ_2^n} \frac{\sgn(a)}{2^n}ax\\
    &=\sum_{a\in \ZZ_2^n} \frac{\sgn(a)}{2^n}\big(\,x- \sgn(a)a x\,\big),
\end{align*}
which is a linear combination of generators of 
$\rCon_\blt X$.
It follows that $\rCon_\blt X$ is the kernel of $\pirev_X$.

The claim about the image $P_nX$ of $p_X$ is immediate.

Since $(\pirev_X)^2=\pirev_X$, the inclusion of $P^r_\blt X$ into $C_\blt X$ splits the sequence.
 By \eqref{eq:equivrev} the kernel and the image of $\pirev_X$ 
    are invariant under symmetries.
    Thus, for a fixed $n$ all three chain groups are $\mathbb{Z}_2^n$-modules.
\end{proof}

With this, the proof of our main result in the cubical setting with reversals is completely analogous to the one in the simplicial setting.  

\begin{proof}[Proof of \ref{mainCub} for $A=\{\rev\}$]
    By \ref{PropRevChainHtpy} we have that $\pirev_X$ is 
    chain homotopic to the identity and the inclusion of $P^r_\blt X \hookrightarrow C_\blt X$ 
    splits the sequence
    \[
    \begin{tikzcd}
    0\arrow{r}{}&\rCon_\blt X\arrow[hook]{r}{} &C_\blt X\arrow{r}{\pirev_X}&P^{\rev}_\blt X \arrow{r}&0.
    \end{tikzcd}
    \]
    It follows that $P^{\rev}_\blt X$ is a deformation 
    retraction of $C_\blt X$. From the long exact 
    sequence in homology we then deduce
    that $\rCon_\blt X$ is acyclic.
    In particular, there are isomorphims:

    \[
    H_\blt (X) = H_\blt(C_\blt X) \simeq H_\blt (C_\blt X /\rCon_\blt X ) \simeq H_\blt(P^\rev_\blt X).
    \]

    If $[\beta] \in H_\blt (C_\blt X /\rCon_\blt X )$ is a homology class, then
    for a reversal $a\in \mathbb{Z}_2^n$ we have $\beta-\sgn(a) a\beta \in \rCon_\blt$. Thus
    $[\beta-\sgn(a) a \beta] = 0$. It follows that $[\beta] = \sgn(a)[a\beta]$.
    
\end{proof}
\subsection{Cubical modules with hyperoctahedral symmetry}\label{sub:hyperoctahedral}
Again, let $R$ be a field with $\textnormal{char}(R)= 0$. We show that given a cubical module with reversals and order preserving symmetries $X\in \mathbf{cMod}(R)^{\trp,\rev}$ it is also possible to quotient $C_\blt X$ by both symmetries and reversals without changing the homology. 
In a given degree $n$, the group of automorphisms of $[1]^n$ in $\square^{\rev,\trp}$ can be identified with the hyperoctahedral group $H_n\simeq S_n \ltimes\ZZ_2^n$, thus every $h\in H_n$ can be uniquely written as $h=t a$, for a symmetry $t\in S_n$ and a reversal $a \in \ZZ_2^n$.  
In a similar way to before, consider the family of maps
$\pitr_X=\{\ptr{n}\colon C_n X \to C_n X\}$ which is given in degree $n$ by
\[
\ptr{n}(x)=\sum_{h\in H_n}\frac{1}{n!2^n}\sgn(h)hx.
\]
Observe that it holds that
\[
\pitr_X=\pirev_X p^{\trp}_X=p^{\trp}_X\pirev_X.
\]
Therefore, $\pitr$ is a composition of chain maps and hence also a chain map. Also it holds that
\begin{align} \label{eq:equivboth}  
  \ptr{n}(\,h\,x\,) & = \sgn(h) \ptr{n}(x) \text{ for } x \in X(\one^n) \text{ and } h \in H_n. 
\end{align}
\begin{proposition}\label{BothSplit}
The kernel of $\pitr_X$ is the $\{\rev,\trp\}$-symmetry sub-complex $\rtCon_\blt X$ and the image $P^{\rev,\trp}_\blt X$ of $p_X$ is given as 
\begin{align*}
      P_n^{\rev,\trp} X\letbe&\Bigg\langle \Bigg\{ \sum_{h\in H_n}\frac{1}{n!2^n}\sgn(h)hx \bigg\vert x\in X([1]^n)\Bigg \} \Bigg\rangle_{\kk}.
\end{align*}

The exact sequence of chain complexes
  \[
   \begin{tikzcd}
    0\arrow{r}{}&\rtCon_\blt X\arrow[hook]{r}{} &C_\blt X\arrow{r}{\pitr_X}&P^{\rev,\trp}_\blt X \arrow{r}&0
    \end{tikzcd}
\]
splits. In addition, in each degree $n$ there is a direct sum decomposition of $H_{n}$ representations
\begin{align*}
C_n X=\rtCon_n X\oplus P^{\rev,\trp}_n X.
\end{align*}
\end{proposition}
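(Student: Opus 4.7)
The plan is to mirror the arguments of \ref{SymSplit} and \ref{RevSplit}, with the hyperoctahedral averaging operator $\pitr_X$ taking the place of the projectors $p_X$ and $\pirev_X$ used there. The structural facts we rely on are already available in the excerpt: $\pitr_X$ is a chain map, being the composition of the commuting chain maps $p^{\trp}_X$ and $\pirev_X$, and it satisfies the equivariance identity \eqref{eq:equivboth}. The only new ingredient we need is the idempotency $(\pitr_X)^2 = \pitr_X$, which follows at once from equivariance:
\[
\pitr_X\bigl(\pitr_X(x)\bigr) = \sum_{h \in H_n}\frac{\sgn(h)}{n!\,2^n}\,\pitr_X(hx) = \sum_{h \in H_n}\frac{\sgn(h)^2}{n!\,2^n}\,\pitr_X(x) = \pitr_X(x),
\]
using $|H_n| = n!\,2^n$ and $\sgn(h)^2 = 1$.

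With idempotency in hand, the identification of the kernel proceeds exactly as in \ref{SymSplit}. The inclusion $\rtCon_\blt X \subseteq \ker(\pitr_X)$ is immediate from \eqref{eq:equivboth} and linearity applied to each generator $x - \sgn(h)hx$. Conversely, if $\pitr_X(x) = 0$ then
\[
x = x - \pitr_X(x) = \sum_{h \in H_n}\frac{1}{n!\,2^n}\bigl(x - \sgn(h)hx\bigr),
\]
which expresses $x$ as an $R$-linear combination of generators of $\rtCon_\blt X$. The description of $P^{\rev,\trp}_\blt X$ as the image of $\pitr_X$ is then immediate from the defining formula of $\pitr_X$.

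Since $(\pitr_X)^2 = \pitr_X$, the inclusion $P^{\rev,\trp}_\blt X \hookrightarrow C_\blt X$ is a section of $\pitr_X$, so the short exact sequence splits with splitting induced by $\pitr_X$ itself. Finally, \eqref{eq:equivboth} shows that both $\rtCon_\blt X$ and $P^{\rev,\trp}_\blt X$ are invariant under the full $H_n$-action in each degree, yielding the claimed direct sum decomposition as $H_n$-representations. I do not foresee a substantive obstacle: the combinatorial and homological work was already carried out in constructing the commuting projectors $p^{\trp}_X$ and $\pirev_X$, and the present statement is a formal consequence of their idempotency and equivariance. The only subtlety worth mentioning is the commutativity $\pirev_X \circ p^{\trp}_X = p^{\trp}_X \circ \pirev_X$, which is recorded in the excerpt and reflects the fact that conjugation of a reversal by an order-preserving symmetry yields another reversal of the same sign.
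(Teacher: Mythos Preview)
Your proposal is correct and follows essentially the same approach as the paper, which simply states that the proof is ``completely analogous to the proofs of \ref{SymSplit} and \ref{RevSplit}.'' You have written out the details of that analogy explicitly, using the equivariance identity \eqref{eq:equivboth} in place of \eqref{eq:equiv} and \eqref{eq:equivrev}, together with the idempotency of $\pitr_X$.
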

\begin{proof}
    Using \ref{BothSplit}, the proof becomes completely analogous to the proofs of \ref{SymSplit} and \ref{RevSplit}.
\end{proof}
\begin{proof}[Proof of \ref{mainCub} for $A=\{\rev,\trp\}$]
    By \ref{PropRevChainHtpy} and \ref{PropSimpChainHtpy} we have that $\pirev_X$ and $p^{\trp}_X$ are  
    chain homotopic to the identity, therefore the same holds for their composition $\pitr_X$. Since the inclusion of $P^r_\blt X \hookrightarrow C_\blt X$ 
    splits the sequence
    \[
    \begin{tikzcd}
    0\arrow{r}{}&\rtCon_\blt X\arrow[hook]{r}{} &C_\blt X\arrow{r}{\pitr_X}&P^{\rev,\trp}_\blt X \arrow{r}&0,
    \end{tikzcd}
    \]
    it follows that $P^{\rev,\trp}_\blt X$ is a deformation 
    retraction of $C_\blt X$. From the long exact 
    sequence in homology we then deduce
    that $\rtCon_\blt X$ is acyclic.
    In particular, there are isomorphims:

    \[
    H_\blt (X) = H_\blt(C_\blt X) \simeq H_\blt (C_\blt X /\rtCon_\blt X ) \simeq H_\blt(P^{\rev,\trp}_\blt X).
    \]

    If $[\alpha] \in H_\blt (C_\blt X /\rtCon_\blt X )$ is a homology class then
    for a hyperoctahedral symmetry $h\in H_n$ we have $\alpha-\sgn(h) h\alpha \in \rtCon_\blt$. Thus
    $[\alpha-\sgn(h) h\alpha] = 0$. It follows that $[\alpha] = \sgn(h)[h\alpha]$.
    
\end{proof}

\section{Counterexamples for arbitrary coefficient rings} \label{sec:counter}

The following examples show that the 
the condition on the characteristic of the field of coefficients in \ref{mainSimp}
and \ref{mainCub} cannot be dropped
completely. For this, we provide counterexamples in integer coefficients.

\subsection{Symmetric Simplicial Abelian Groups} \label{subsec:ssag}

First, we give a combinatorial description of the simplicial 
set $X$, for which the simplicial Abelian group $\mathbb{Z}X$ will form the counterexample in the simplicial case.

For every integer $n \geq 0$, we set $X[n]$ to be the equivalence classes of $(n+1)$-tuples $(\ve_0,\ldots,\ve_{n})$ with $\ve_i\in\{0,1\}$ with respect to
the equivalence relation generated by $$(\ve_0,\ldots,\ve_{n}) \sim (1-\ve_0,\ldots,1-\ve_{n}).$$ The equivalence classes 
will be denoted $\overline{(\ve_0,\cdots ,\ve_{n})}$. The set of 
equivalence classes carries the structure of a simplicial set 
when endowed with the following maps: 
\begin{itemize}
\item face maps $d_i$ omitting the $i$-th coordinate,
\item degeneracy maps $s_i$  duplicating the $i$-th coordintate
\item and symmetries permuting the entries. 
\end{itemize}
Listing the $X[i]$ for small $i$ we obtain the following:
\begin{align*}
    X[-1] &=\emptyset  \\
    X[0]  &= \left\{\,\projzero{0}\,\right\}  \\
    X[1]  &= \left\{\,\projone{0}{0},\projone{0}{1}\,\right\}\\
    X[2]  &= \left\{\,\projtwo{0}{0}{0},\projtwo{0}{0}{1},\projtwo{0}{1}{0},\projtwo{0}{1}{1} \,\right\}\\
    X[3]  &= \Big\{\,\projthree{0}{0}{0}{0},\projthree{0}{0}{0}{1},\projthree{0}{0}{1}{0},\projthree{0}{0}{1}{1},\\ &\phantom{=\big\{\,} \projthree{0}{1}{0}{0},\projthree{0}{1}{0}{1},\projthree{0}{1}{1}{0},\projthree{0}{1}{1}{1}\,\Big\}.
\end{align*}
The simplicial Abelian group $\ZZ X$ in degree $n$ is the free Abelian group generated by the elements of $X[n]$, hence the chains of the chain complex $C_\blt X$ are $\ZZ$-linear combinations of the elements of $X[n]$. Consider the chain
\[
a=\projthree{0}{1}{1}{0}+t_2\left(\projthree{0}{1}{1}{0}\right)= \projthree{0}{1}{1}{0}+\projthree{0}{1}{0}{1},
\]
which is contained in the symmetry sub-complex $\sDeg_\blt \ZZ X$.
By calculating the boundary of $a$ we obtain
\begin{align*}
    \del\left(\projthree{0}{1}{1}{0}+\projthree{0}{1}{0}{1}\right)&=\projtwo{1}{1}{0}+\projtwo{1}{0}{1}-\projtwo{0}{1}{0}-\projtwo{0}{0}{1}\\&\quad+\projtwo{0}{1}{0}+\projtwo{0}{1}{1}-\projtwo{0}{1}{1}-\projtwo{0}{1}{0}\\
    &=0,
\end{align*}
thus $a$ is a cycle. However, it is easy to see that for any chain in $\ZZ X[4]$ the chain $\projthree{0}{1}{0}{1}$ appears an even number of times in its boundary. 
Thus the cycle $a$ is not a boundary in $C_\blt\ZZ X$ and in particular not in the sub-complex $\sDeg_\blt\ZZ X$. Therefore, we have that both $H_3\big(\,\sDeg_\blt \ZZ X\,\big)\neq 0$
and $H_3\big(\,(\Deg_\blt + \sDeg_\blt) \ZZ X\,\big)\neq 0$.

In the cubical case we will provide
in \ref{subsec:scag} a less combinatorial description of the counterexample. As a preparation we describe the above
constructed example for the simplicial case in a similar fashion.
Let $\yo: \Dsym\to \mathbf{sSet}^{sym}_a$ denote the Yoneda embedding. The transposition $t_0\in \textnormal{Hom}_{\Dsym}([1],[1])$ gives rise to an endomorphism $(\bt_0)_*\letbe \yo(\bt_0)$ of $\yo([1])$ by postcomposition.
Let
\[
\begin{tikzcd}
X\letbe\textnormal{colim}\Big(\yo([1])\dblar{r}{(\bt_0)_*}{\id}&\yo([1])\Big).   
\end{tikzcd}
\]
Since the category of sets is cocomplete, so is the category of presheaves over the symmetric simplex category. Thus this coequalizer exists and can be calculated pointwise. 
Application of the free Abelian group functor $\mathbb{Z}(\cdot)\colon\mathbf{sSet}^{sym}_a\to \mathbf{sMod}(\ZZ)^{\textnormal{sym}}_a$ yields a symmetric Abelian group $\ZZ X$,
which is easily seen to coincide with the example constructed at the beginning of this subsection.

\subsection{Symmetric cubical Abelian groups} \label{subsec:scag}

Next, we construct examples with non-acyclic symmetry sub-complexes in the cubical setting. For $A\subset \{\rev,\trp\}$ consider the Yoneda em\-bed\-ding 
\[
\yo^{A}\colon\square^{A}\to \cSet{A}
\]and for $n \in \mathbb{N}$ denote by $\square^{A,n}$ the standard cubical $n$-cube, that is
\[
\square^{A,n}=\yo^{A}([1]^n).
\]

We define for each $\emptyset \neq A$ a cubical set. 
In the definition, for $\yo^A$ we write $\yo^\trp$ if $A = \{\trp\}$, 
$\yo^\rev$ if $A = \{\rev\}$ and
$\yo^{\rev,\trp}$ if $A = \{\rev,\trp\}$.

\medskip

\begin{enumerate}
    \item $ A = \{\trp\}:$ cubical sets with order preserving symmetries

    \noindent Set $$(\bt_1)_*\letbe \yo^{\trp}(\bt_1)\in \textnormal{End}(\repcat{\trp}{2})$$ 
    and
    \[
    \begin{tikzcd}
        Y_\trp\letbe\textnormal{colim}\Big(\repcat{\trp}{2}\dblar{r}{(\bt_1)_*}{\id}&\repcat{\trp}{2}\Big).   
    \end{tikzcd}
    \]
    \item ${ A = \{\rev\}}:$ cubical sets with reversals

    \noindent Set $$(\br_1)_*=\yo^{\rev}(\br_1)\in \textnormal{End}(\repcat{\rev}{1})$$
    and 
    \[
    \begin{tikzcd}
        Y_\rev\letbe\textnormal{colim}\Big(\repcat{\rev}{1}\dblar{r}{(\br_1)_*}{\id}&\repcat{\rev}{1}\Big).   
    \end{tikzcd}
    \]
    \item ${ A = \{\rev,\trp\}:}$ cubical sets with full hyperoctahedral symmetry

    \noindent Set $$(\br_1)_*=\yo^{\rev,\trp}(\br_1)\in \textnormal{End}(\repcat{\rev,\trp}{1})$$
    and
    \[
    \begin{tikzcd}
        Y_{\rev,\trp}\letbe\textnormal{colim}\Big(\repcat{\rev,\trp}{1}\dblar{r}{(\br_1)_*}{\id}&\repcat{\rev,\trp}{1}\Big).   
    \end{tikzcd}
    \]
\end{enumerate}
 
To the $A$-symmetric cubical sets $Y_\trp,Y_\rev,Y_{\rev,\trp}$, apply the free functor $\mathbb{Z}(\cdot)\colon\cSet{A}\to \cMod{A}{\ZZ}$ to obtain an $A$-symmetric cubical Abelian groups $\ZZ Y_\trp,\ZZ Y_\rev$ and $\ZZ Y_{\rev,\trp}$ respectively. The following table shows the first four homology groups of the $A$-symmetry sub-complexes $\tCon_\blt \ZZ Y_\trp$, $\rCon_\blt\ZZ Y_\rev$ and
$\rtCon_\blt\ZZ Y_{\rev,\trp}$. 

\begin{center}
\def\arraystretch{1.3}
\begin{tabular}{c|c c c c} 
 &$H_1$ & $H_2$ & $H_3$ & $H_4$ \\ 
 \hline
 $\tCon_\blt \ZZ Y_{\trp}$& $0$&$0$&$0$&$\ZZ_2$\\
 $\rCon_\blt \ZZ Y_{\rev}$& $0$&$0$&$\ZZ_2$&?\\
 $\tCon_\blt \ZZ Y_{\rev,\trp}$& $0$&$0$&$\ZZ_2$&?
 \end{tabular}
\end{center}

The data from the table was obtained through direct computation. We provide an outline of the computations for $ Y_\rev $ below; the values for $ Y_\trp $ and $ Y_{\rev,\trp} $ were derived in an analogous, but slightly more complicated, manner.

\noindent \textbf{Calculation for $Y_\rev$:} By definition of the Yoneda embedding, $\repcub{\rev}{1}{k}$ is the set of morphisms from $[1]^k$ to $[1]^1$ in the cube category. This family of sets is a cubical set with reversals as follows: Given a morphism $\boldsymbol{f}:[1]^k\to [1]^n$ in the cube category with reversals, postcomposition gives rise to a map
\[
    f:\repcub{\rev}{1}{n}\to \repcub{\rev}{1}{k}.
\]
To make the notation more manageable, we write $\br_{i_1,\dots i_k}$ or  $\br_{\{i_1,\dots i_k\}}$ for  $\br_{i_1}\cdots \br_{i_k}$ (recall that the $\br_i$ commute) and set
$$\bg_{i_1,\dots i_k}^{\ve_1,\dots,\ve_k}\letbe \bg_{i_1}^{\ve_1}\cdots \bg_{i_k}^{\ve_k}.$$

Using the relations from \hyperref[appendix]{Appendix~A}, one can show that
every morphism in the cube category with reversals can be written, from left to right, as face maps composed with connections, composed with reversals, composed with degeneracies (see \cite[Equation (58)]{GrandisMauri}). Since there are no connections with domain $[1]^1$, only degenerate maps are obtained if there is a face map appearing in this normal form. Using this, an easy computation shows that 
\begin{align*}
    \repcub{\rev}{1}{0}&=\left\{\bd_1^1,\bd_1^0 \right\}\\
    \repcub{\rev}{1}{1}&=\left\{\id,\br_1 \right\}\cup\left\{\textnormal{degeneracies}\right\}\\
    \repcub{\rev}{1}{2}&=\left\{\bg_1^0,\bg_1^0\br_1,\bg_1^0\br_2,\bg_1^0\br_{1,2},\bg_1^1,\bg_1^1\br_1,\bg_1^1\br_2,\bg_1^1\br_{1,2}\right\}\cup\left\{\textnormal{degeneracies}\right\}\\
    \vdots
\end{align*}
The Yoneda embedding maps the morphism $\boldsymbol{r}_1$ to the morphism $(\boldsymbol{r}_1)_*$ of cubical sets defined by precomposition.
Therefore, since colimits in presheaf categories can be computed pointwise, to obtain $Y_\rev$, we identify every element $\boldsymbol{f}\in \repcub{\rev}{1}{k}$ with $\br_1 \boldsymbol{f}$.  
We denote the equivalence class of $\boldsymbol{f}$ also by $\boldsymbol{f} $.  The cubical alternating face map complex $C_\blt Y_\rev$ is then generated by these equivalence classes modulo the degeneracies. In small degrees, it can be shown that the chain complex has the following
minimal generating sets:

\begin{align*}
    C_0 Y_\rev &=\left\langle\big\{\bd_1^0 \big\}\right \rangle_\mathbb{Z} \\
    C_1 Y_\rev &=\left\langle\big\{\id \big\}\right \rangle_\mathbb{Z} \\
    C_2 Y_\rev &=\left\langle\big\{ \bg_1^0,\bg_1^0\br_1,\bg_1^0\br_2,\bg_1^0\br_{1,2}\big\}\right \rangle_\mathbb{Z} \\
    C_3 Y_\rev &=\left\langle\big\{ \bg_{1,2}^{0,0}\br_A,\bg_{1,1}^{0,1}\br_A,\bg_{1,2}^{0,1}\br_A\,\big\vert\, A\subseteq \{1,2,3\}\big\}\right \rangle_\mathbb{Z} \\
    C_4 Y_\rev &=\big\langle\big\{\bg_{1,2,3}^{0,0,0}\br_A,
    \bg_{1,2,2}^{0,0,1}\br_A,\bg_{1,2,3}^{0,0,1}\br_A,
    \bg_{1,1,1}^{0,1,0}\br_A,\bg_{1,1,2}^{0,1,0}\br_A,
    \bg_{1,1,3}^{0,1,0}\br_A,\\&\qquad~~\bg_{1,2,2}^{0,1,0}\br_A,\bg_{1,2,3}^{0,1,0}\br_A,
    \bg_{1,1,2}^{0,1,1}\br_A,\bg_{1,1,3}^{0,1,1}\br_A,\bg_{1,2,3}^{0,1,1}\br_A
    \,\big\vert\, A\subseteq \{1,2,3,4\}\big\}\big\rangle_\mathbb{Z}.
\end{align*}
Note that some additional care is required to compute the differential, since the set of representatives is not closed under precomposition with $\bd_i^\ve$.
For finding a minimal generating system of the $\{\rev\}$-symmetry sub-complex, fix Hamiltonian paths $E=(e_1,\dots ,e_8)$ and $F=(f_1,\dots ,f_{16})$ in the Hasse diagrams of the Boolean lattices of order $3$ and $4$ respectively. Then

\begin{align*}
    \rCon_0Y_\rev&=0\\
    \rCon_1Y_\rev&=\left\langle\{2\cdot \id\right\rangle\}\\
    \rCon_2Y_\rev&=\left\langle \bg_1^0+\bg_1^0\br_1,\bg_1^0\br_1 + \bg_1^0\br_{1,2} ,\bg_1^0\br_{1,2}+\bg_1^0\br_2 \right\rangle_\mathbb{Z}\\
    \rCon_3Y_\rev&=\left\langle\big\{ \bg_{1,2}^{0,0}(\br_{e_i}+\br_{e_{i+1}}),\bg_{1,1}^{0,1}(\br_{e_i}+\br_{e_{i+1}}),\bg_{1,2}^{0,1}(\br_{e_i}+\br_{e_{i+1}})\,\big\vert\, 1\leq i \leq 7\big\}\right \rangle_\mathbb{Z}  \\
    \rCon_4Y_\rev&=\big\langle\big\{\bg_{1,2,3}^{0,0,0}(\br_{f_i}+\br_{f_{i+1}}),
    \bg_{1,2,2}^{0,0,1}(\br_{f_i}+\br_{f_{i+1}}),\bg_{1,2,3}^{0,0,1}(\br_{f_i}+\br_{f_{i+1}}),
    \bg_{1,1,1}^{0,1,0}(\br_{f_i}+\br_{f_{i+1}}),\\&\qquad~~
    \bg_{1,1,2}^{0,1,0}(\br_{f_i}+\br_{f_{i+1}}),\bg_{1,1,3}^{0,1,0}(\br_{f_i}+\br_{f_{i+1}}),\bg_{1,2,2}^{0,1,0}(\br_{f_i}+\br_{f_{i+1}}),\bg_{1,2,3}^{0,1,0}(\br_{f_i}+\br_{f_{i+1}}),\\&\qquad~~
    \bg_{1,1,2}^{0,1,1}(\br_{f_i}+\br_{f_{i+1}}),\bg_{1,1,3}^{0,1,1}(\br_{f_i}+\br_{f_{i+1}}),\bg_{1,2,3}^{0,1,1}(\br_{f_i}+\br_{f_{i+1}}) \,\big\vert\, 1\leq i \leq 15\big\}\big\rangle_\mathbb{Z}.
\end{align*}

In particular, the ranks of the $\rCon_kY_\rev$ are $0,1,3,21,165,\dots$. From here, the homology groups can be calculated by brute force.

\appendix
\section{Relations among maps in symmetric simplicial and cubical structures} \label{appendix}

\subsection{Simplicial Identities}
All the relations are in $\Dsym$.
\begin{simplicial}
\label{eq:I} \bd_j\bd_i=\bd_{i+1}\bd_j  &\quad \for j\leq i
\end{simplicial}
\vskip-0.5cm
\begin{simplicial}
\label{eq:II}   \bs_j\bs_i =\bs_i\bs_{j+1} &\quad\for j\geq i
\end{simplicial}
\vskip-0.5cm
\begin{simplicial}
\label{eq:III} \bs_j\bd_i&=\begin{cases}
    \bd_{i-1}\bs_{j} &\for j<i-1\\
    \id &\for j=i,i-1\\
    \bd_i\bs_{j-1} &\for j>i
\end{cases} 
\end{simplicial}
\vskip0.1cm
\begin{simplicial}
\label{eq:IV} \bt_j\bd_i &= \begin{cases}
    \bd_i\bt_j & \for j<i-1 \\
    \bd_{i-1} & \for j=i-1\\
    \bd_{i+1} & \for j=i \\
    \bd_i\bt_{j-1} & \for j>i
\end{cases} 
\end{simplicial}

\begin{simplicial}
\label{eq:V}  \bs_j\bt_i &=\begin{cases}
 \bt_{i-1}\bs_j  & \for j< i-1\\ 
 \bt_{i-1}\bs_{i}\bt_{i-1}&\for j=i-1\\
 \bs_j &\for j=i\\
 \bt_{i}\bs_i\bt_{i+1}&\for j=i+1 \\
  \bt_i\bs_j  & \for j>i+1 
\end{cases} 
\end{simplicial}

\begin{simplicial}
\label{eq:VI}  \bt_j\bt_i  = \bt_i\bt_j \quad \for |i-j|>1 
\end{simplicial}

\begin{simplicial}
\label{eq:VII}  \bt_i^2=\id 
\end{simplicial}

\begin{simplicial}
\label{eq:VIII}  (\bt_i\bt_{i+1})^3=\id 
\end{simplicial}

\subsection{Cubical Identities}
All the relations are in $\Box^A$:

\begin{cubical}
 \label{Ceq:I}\bd_{j}^\eta\bd_{i}^\ve  =  \bd_{i+1}^\ve \bd_{j}^\eta  \quad \textnormal{for } j \leq i 
 \end{cubical}
 \begin{cubical}
 \label{Ceq:II} \bs_j \bd_{i}^\ve  &=
\begin{cases}
  \bd_{i-1}^\ve \bs_j  & \textnormal{for } j < i \\
\textnormal{id} & \textnormal{for } j = i \\
  \bd_{i}^\ve \bs_{j-1}  & \textnormal{for } j > i
\end{cases} 
\end{cubical}
\begin{cubical}
 \label{Ceq:III} \bs_i \bs_j  &=    \bs_j\bs_{i+1} \quad \textnormal{for } j \leq i 
 \end{cubical}
 \begin{cubical}
\label{Ceq:IV}\bConnection_{i}^\eta \bConnection_{j}^\ve  &=
\begin{cases}
 \bConnection_{j}^\ve \bConnection_{i+1}^\eta & \textnormal{for } j<i \\
\bConnection_{i}^\ve  \bConnection_{i+1}^\ve& \textnormal{for } j = i, \eta = \ve 
\end{cases}
\end{cubical}

\begin{cubical}
\label{Ceq:V} \bConnection_{j}^\eta \bd_{i}^\ve &=
\begin{cases}
 \bd_{i-1}^\ve \bConnection_{j}^\eta  & \textnormal{for } j < i - 1 \\
\textnormal{id} & \textnormal{for } j = i - 1, i\text{ and } \ve = \eta \\
 \bd_{j}^\ve \bs_j  & \textnormal{for } j = i - 1, i \text{ and } \ve = 1 - \eta \\
\bd_{i}^\ve \bConnection_{j-1}^\eta  & \textnormal{for } j > i\\
\end{cases} 
\end{cubical}

\begin{cubical}
\label{Ceq:VI}\bs_j \bConnection_{i}^\ve   &=
\begin{cases}
  \bConnection_{i-1}^\ve \bs_j & \textnormal{for } j < i \\
  \bs_i \bs_i & \textnormal{for } j = i \\
 \bConnection_{i}^\ve \bs_{j+1} & \textnormal{for } j > i
\end{cases}\\
\end{cubical}
\vskip-1.2cm
\begin{cubical}
\label{Ceq:VII}
  \bConnection_j^\varepsilon \bt_i &=\begin{cases}
    \bt_{i-1}\bConnection_j^\varepsilon   & \for j<i-1\\
     \bt_{i-1}\bConnection_{i}^\ve \bt_{i-1} & \for j=i-1\\
    \bConnection_j^\ve & \for j=i\\
     \bt_{i}\bConnection_i^\ve \bt_{i+1}& \for j=i+1\\
    \bt_i\bConnection_j^\varepsilon  & \for j>i+1
\end{cases}\\
\end{cubical}
\vskip-1.2cm
\begin{cubical}
\label{Ceq:VIII}
 \bt_j \bd_i^\ve  & = \begin{cases}
       \bd_i^\ve \bt_j & \for j<i-1\\
     \bd_{i-1}^\ve & \for j=i-1\\
     \bd_{i+1}^\ve & \for j=i\\
       \bd_i^\ve \bt_{j-1} & \for j>i
\end{cases}\\
\end{cubical}
\vskip-1.2cm
\begin{cubical}
\label{Ceq:IX}
  \bs_i\bt_j &= \begin{cases}
 \bt_{j}\bs_i & \for j<i-1 \\
 \bs_{i-1} & \for j=i-1 \\
 \bs_{i+1} & \for j=i \\
 \bt_{j-1}\bs_i  & \for j>i
\end{cases} \\
\end{cubical}
\vskip-1.2cm
\begin{cubical}
\label{Ceq:X}
\bt_j \bt_i &= \bt_i \bt_j  \quad \for |i-j|>1\\
\end{cubical}
\vskip-1.2cm
\begin{cubical}
\label{Ceq:XI}
 \bt_i^2&=\id\\
\end{cubical}
\vskip-1.2cm
\begin{cubical}
\label{Ceq:XII}
( \bt_i \bt_{i+1})^3&=\id
\end{cubical}
\begin{cubical}
\label{Ceq:XIII}
 \br_j \bd_i^\ve&=\begin{cases}
 \bd_i^\ve \br_j & j < i \\
\bd_i^{1-\ve} & j=i\\
\bd_i^\ve \br_{j-1} & j > i
\end{cases}
\end{cubical}
\begin{cubical}
\label{Ceq:XIV}
\bs_j \br_i  &=\begin{cases}
\br_{i-1} \bs_j & j<i\\
\bs_i & j=i\\
\br_i\bs_j  & j>i 
\end{cases}
\end{cubical}
\begin{cubical}
\label{Ceq:XV}
\br_j\bConnection_i^\ve &=\begin{cases}
\bConnection_i^\ve \br_j & j<i\\
\bConnection_i^{1-\ve}\br_j\br_{j+1} & j=i\\
\bConnection_i^\ve\br_{j+1} & j>i 
\end{cases}
\end{cubical}
\begin{cubical}
\label{Ceq:XVI}
 \bt_j \br_i &=\begin{cases}
  \br_j \bt_j & j=i-1\\
  \br_{i+1} \bt_j & j=i\\
 \br_i\bt_j  & \textnormal{else}
\end{cases}
\end{cubical}
\begin{cubical}
\label{Ceq:XVII}
\br_i \br_j &=\begin{cases}
 \id & j=i\\
 \br_j\br_i& \textnormal{else} \\
\end{cases}
\end{cubical}

\bibliographystyle{alphaurl}

\begin{thebibliography}{BBdLL06}

\bibitem[Atk74]{Atkin1}
Ronald~H. Atkin.
\newblock An algebra for patterns on a complex. {I}.
\newblock {\em Internat. J. Man-Mach. Stud.}, 6:285--307, 1974.
\newblock \href {https://doi.org/10.1016/S0020-7373(74)80024-6}
  {\path{doi:10.1016/S0020-7373(74)80024-6}}.

\bibitem[Atk76]{Atkin2}
Ronald~H. Atkin.
\newblock An algebra for patterns on a complex. {II}.
\newblock {\em Internat. J. Man-Mach. Stud.}, 8(5):483--498, 1976.
\newblock \href {https://doi.org/10.1016/s0020-7373(76)80015-6}
  {\path{doi:10.1016/s0020-7373(76)80015-6}}.

\bibitem[BBdLL06]{HTgraphs06}
Eric Babson, H\'el\`ene Barcelo, Mark de~Longueville, and Reinhard
  Laubenbacher.
\newblock Homotopy theory of graphs.
\newblock {\em J. Algebraic Combin.}, 24(1):31--44, 2006.
\newblock \href {https://doi.org/10.1007/s10801-006-9100-0}
  {\path{doi:10.1007/s10801-006-9100-0}}.

\bibitem[BCW14]{DiscreteHomology}
H\'el\`ene Barcelo, Valerio Capraro, and Jacob~A. White.
\newblock Discrete homology theory for metric spaces.
\newblock {\em Bull. Lond. Math. Soc.}, 46(5):889--905, 2014.
\newblock \href {https://doi.org/10.1112/blms/bdu043}
  {\path{doi:10.1112/blms/bdu043}}.

\bibitem[BGJW19]{ComparisonWithPath}
H\'el\`ene Barcelo, Curtis Greene, Abdul~Salam Jarrah, and Volkmar Welker.
\newblock Discrete cubical and path homologies of graphs.
\newblock {\em Algebr. Comb.}, 2(3):417--437, 2019.
\newblock \href {https://doi.org/10.5802/alco.49} {\path{doi:10.5802/alco.49}}.

\bibitem[BGJW21]{HomologyConnections}
H\'el\`ene Barcelo, Curtis Greene, Abdul~Salam Jarrah, and Volkmar Welker.
\newblock Homology groups of cubical sets with connections.
\newblock {\em Appl. Categ. Structures}, 29(3):415--429, 2021.
\newblock \href {https://doi.org/10.1007/s10485-020-09621-x}
  {\path{doi:10.1007/s10485-020-09621-x}}.

\bibitem[CK24]{carranza2022cubicalsettingdiscretehomotopy}
Daniel Carranza and Chris Kapulkin.
\newblock Cubical setting for discrete homotopy theory, revisited.
\newblock {\em Compos. Math.}, 12:2856--2903, 2024.
\newblock \href {https://doi.org/10.1112/S0010437X24007486}
  {\path{doi:10.1112/S0010437X24007486}}.

\bibitem[CKT23]{CKT}
Daniel Carranza, Krzysztof Kapulkin, and Andrew Tonks.
\newblock The {H}urewicz theorem for cubical homology.
\newblock {\em Math. Z.}, 305(4):Paper No. 61, 20, 2023.

\bibitem[CM18]{CM18}
Samir Chowdhury and Facundo M\'emoli.
\newblock Persistent path homology of directed networks.
\newblock In {\em Proceedings of the {T}wenty-{N}inth {A}nnual {ACM}-{SIAM}
  {S}ymposium on {D}iscrete {A}lgorithms}, pages 1152--1169. SIAM,
  Philadelphia, PA, 2018.
\newblock \href {https://doi.org/10.1137/1.9781611975031.75}
  {\path{doi:10.1137/1.9781611975031.75}}.

\bibitem[CS24]{CS24}
Gunnar Carlsson and Santiago Segarra.
\newblock Data science for graphs [{P}reface].
\newblock {\em J. Appl. Comput. Topol.}, 8(5):1099--1100, 2024.
\newblock \href {https://doi.org/10.1007/s41468-024-00186-5}
  {\path{doi:10.1007/s41468-024-00186-5}}.

\bibitem[Doh23]{DohertyConnections}
Brandon Doherty.
\newblock Cubical models of higher categories without connections.
\newblock {\em J. Pure Appl. Algebra}, 227(9):Paper No. 107373, 55, 2023.
\newblock \href {https://doi.org/10.1016/j.jpaa.2023.107373}
  {\path{doi:10.1016/j.jpaa.2023.107373}}.

\bibitem[ES52]{EilenbergSteenrodBook}
Samuel Eilenberg and Norman Steenrod.
\newblock {\em Foundations of algebraic topology}.
\newblock Princeton University Press, Princeton, NJ, 1952.

\bibitem[GJ99]{JardineGoerss}
Paul~G. Goerss and John~F. Jardine.
\newblock {\em Simplicial homotopy theory}, volume 174 of {\em Progress in
  Mathematics}.
\newblock Birkh\"auser Verlag, Basel, 1999.
\newblock \href {https://doi.org/10.1007/978-3-0348-8707-6}
  {\path{doi:10.1007/978-3-0348-8707-6}}.

\bibitem[GM03]{GrandisMauri}
Marco Grandis and Luca Mauri.
\newblock Cubical sets and their site.
\newblock {\em Theory Appl. Categ.}, 11:No. 8, 185--211, 2003.

\bibitem[God73]{GodementBook}
Roger Godement.
\newblock {\em Topologie alg\'ebrique et th\'eorie des faisceaux}, volume XIII
  of {\em Publications de l'Institut de Math\'ematique de l'Universit\'e{} de
  Strasbourg}.
\newblock Hermann, Paris, 1973.
\newblock Troisi\`eme \'edition revue et corrig\'ee, Actualit\'es Scientifiques
  et Industrielles, No. 1252. [Current Scientific and Industrial Topics].

\bibitem[Gra01]{Grandis}
Marco Grandis.
\newblock Finite sets and symmetric simplicial sets.
\newblock {\em Theory Appl. Categ.}, 8:244--252, 2001.

\bibitem[Gra09]{Grandis09}
Marco Grandis.
\newblock The role of symmetries in cubical sets and cubical categories ({O}n
  weak cubical categories, {I}).
\newblock {\em Cah. Topol. G\'eom. Diff\'er. Cat\'eg.}, 50(2):102--143, 2009.

\bibitem[Hat02]{hatcher}
Allen Hatcher.
\newblock {\em Algebraic topology}.
\newblock Cambridge University Press, Cambridge, 2002.

\bibitem[Kan55]{Kan}
Daniel~M. Kan.
\newblock Abstract homotopy. {I}.
\newblock {\em Proc. Nat. Acad. Sci. U.S.A.}, 41:1092--1096, 1955.
\newblock \href {https://doi.org/10.1073/pnas.41.12.1092}
  {\path{doi:10.1073/pnas.41.12.1092}}.

\bibitem[KK24]{NathanAndChris}
Chris Kapulkin and Nathan Kershaw.
\newblock Efficient computations of discrete cubical homology, 2024.
\newblock \href {https://arxiv.org/abs/2410.09939} {\path{arXiv:2410.09939}}.

\bibitem[KK25]{NathanChrisTDA}
Chris Kapulkin and Nathan Kershaw.
\newblock Data analysis using discrete cubical homology, 2025.
\newblock \href {https://arxiv.org/abs/2506.15020} {\path{arXiv:2506.15020}}.

\bibitem[Wei94]{WeibelBook}
Charles~A. Weibel.
\newblock {\em An introduction to homological algebra}, volume~38 of {\em
  Cambridge Studies in Advanced Mathematics}.
\newblock Cambridge University Press, Cambridge, 1994.
\newblock \href {https://doi.org/10.1017/CBO9781139644136}
  {\path{doi:10.1017/CBO9781139644136}}.

\end{thebibliography}

\end{document}